\definecolor{red}{rgb}{1,0,0}
\definecolor{blue}{rgb}{.2,.2,.8}
\definecolor{magenta}{rgb}{1,0,1}
\definecolor{dartmouthgreen}{rgb}{0.05, 0.5, 0.06}
\definecolor{purple(x11)}{rgb}{0.63,0.36,0.94}
\definecolor{turquoise}{rgb}{0.25, 0.87, 0.81}
\def\m{\mu}
\def\spmod{\!\!\!\!\pmod}
\newtheorem{theorem}{Theorem}[section]
\newtheorem{corollary}[theorem]{Corollary}
\newtheorem{proposition}[theorem]{Proposition}
\newtheorem{conj}{Conjecture}
\theoremstyle{definition}
\newtheorem{remark}{Remark}
\title{On a Partition Identity of Lehmer}
\author{Cristina Ballantine, Hannah Burson, Amanda Folsom, \\Chi-Yun Hsu, Isabella Negrini, and Boya Wen}
\date{}
\begin{document}

\maketitle

\begin{abstract}
Euler's identity equates the number of partitions of any non-negative integer $n$ into odd parts and the number of partitions of $n$ into distinct parts.
Beck conjectured and Andrews proved the following companion to Euler's identity:  
the excess of the number of parts in all partitions of $n$ into odd parts over the number of parts in all partitions of $n$ into distinct parts equals the number of partitions of $n$ with exactly one even part (possibly repeated).  Beck's original conjecture was followed by generalizations and so-called ``Beck-type" companions to other identities.

In this paper, we establish a collection of Beck-type companion identities to the following result mentioned by Lehmer at the 1974 International Congress of Mathematicians:  the excess of the number of partitions of $n$ with an even number of even parts over the number of partitions of $n$ with an odd number of even parts equals the number of partitions of $n$ into distinct, odd parts.    We also establish various generalizations of Lehmer's identity, and prove related Beck-type companion identities.  We use both analytic and combinatorial methods in our proofs.

\end{abstract}

\renewcommand{\thefootnote}{\fnsymbol{footnote}} 
\footnotetext{ {\bf Keywords:} partitions, Beck-type identities, Lehmer's identity, $q$-series.\\
{\bf Mathematics subject classification:} 05A17,05A19,11P83 }     
\renewcommand{\thefootnote}{\arabic{footnote}} 

\section{Introduction and statement of results}\label{sec_intro}

Many results in the theory of partitions  concern  identities asserting that the set $\mathcal P_{X}(n)$ of partitions of $n$ satisfying condition $X$ and the set $\mathcal P_{Y}(n)$ of partitions of $n$ satisfying condition $Y$ are equinumerous. Likely the oldest such result is Euler's identity that the number of partitions of $n$ into odd parts is equal to the number of partitions of $n$ into distinct parts. In 2017, Beck 
 made the following conjecture (\cite{oeisA090867}, \cite[Conjecture]{A17}):
\begin{conj}[Beck] \label{conj}
The excess of the number of parts in all partitions of $n$ into odd parts over the number of parts in all partitions of $n$ into distinct parts equals the number of partitions of $n$ with exactly one even part (possibly repeated).
\end{conj}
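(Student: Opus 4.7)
The plan is to prove Conjecture~\ref{conj} analytically via generating functions. Let $a(n)$, $b(n)$, and $c(n)$ denote the three quantities appearing in the statement, so that the goal is $a(n)-b(n)=c(n)$ for all $n$. Introducing an auxiliary variable $z$ to mark the number of parts and differentiating at $z=1$ yields
\begin{align*}
\sum_{n\geq 0} a(n)\, q^n &= \Bigl(\sum_{k\geq 1} \frac{q^{2k-1}}{1-q^{2k-1}}\Bigr) \prod_{k\geq 1} \frac{1}{1-q^{2k-1}}, \\
\sum_{n\geq 0} b(n)\, q^n &= \Bigl(\sum_{k\geq 1} \frac{q^{k}}{1+q^{k}}\Bigr) \prod_{k\geq 1}(1+q^{k}).
\end{align*}
For $c(n)$, each partition being counted is determined by a choice of its unique even value $2k$ (with multiplicity $\geq 1$) together with an arbitrary partition of the remainder into odd parts, so
\[
\sum_{n\geq 0} c(n)\, q^n = \Bigl(\sum_{k\geq 1} \frac{q^{2k}}{1-q^{2k}}\Bigr)\prod_{k\geq 1} \frac{1}{1-q^{2k-1}}.
\]

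Next, I invoke Euler's identity $\prod_{k\geq 1}(1+q^{k}) = \prod_{k\geq 1}(1-q^{2k-1})^{-1}$ to pull out the common factor $\prod_{k\geq 1}(1-q^{2k-1})^{-1}$ from all three generating series. The claim $a(n)-b(n)=c(n)$ then reduces to the single $q$-series identity
\[
\sum_{k\geq 1} \frac{q^{2k-1}}{1-q^{2k-1}} \;-\; \sum_{k\geq 1} \frac{q^{k}}{1+q^{k}} \;=\; \sum_{k\geq 1} \frac{q^{2k}}{1-q^{2k}}.
\]

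To verify this, I would use the elementary identity $\frac{q^{k}}{1+q^{k}} = \frac{q^{k}}{1-q^{2k}} - \frac{q^{2k}}{1-q^{2k}}$, after which cancellation of the $q^{2k}/(1-q^{2k})$ terms reduces the problem to showing $\sum_{k\geq 1} \frac{q^{k}}{1-q^{2k}} = \sum_{j\geq 1,\; j \text{ odd}} \frac{q^{j}}{1-q^{j}}$. This last equality follows by expanding each summand on the left as $\sum_{m\geq 0} q^{k(2m+1)}$ and re-collecting the resulting double sum according to $j = k(2m+1)$: the coefficient of $q^n$ on either side equals the number of odd divisors of $n$.

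I expect the main (minor) obstacle to be bookkeeping: justifying that the logarithmic differentiation and double-sum rearrangement are valid as identities of formal power series in $\mathbb{Z}[[q]]$. A combinatorial alternative would refine Glaisher's bijection between partitions into odd parts and partitions into distinct parts, tracking how the number of parts changes; the residual data accounting for that change should match partitions with exactly one even part (possibly repeated). Such a bijective proof would be more illuminating, but technically heavier than the analytic argument outlined above.
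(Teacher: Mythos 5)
Your proof is correct, and---although the paper itself does not prove Conjecture~\ref{conj} but rather cites Andrews's analytic proof and the combinatorial proofs of Yang and Ballantine--Bielak---your argument is precisely the method the paper uses for all of its own Beck-type identities: differentiate a two-variable generating function at $z=1$, invoke Euler's identity $(-q;q)_\infty=(q;q^2)_\infty^{-1}$ to factor out the common infinite product, and reduce everything to a single Lambert-series identity. The only cosmetic difference is that the paper verifies such identities by termwise algebraic manipulation (writing $\frac{q^k}{1+q^k}=\frac{q^k}{1-q^{2k}}-\frac{q^{2k}}{1-q^{2k}}$ and recombining odd- and even-indexed terms), whereas you finish by matching coefficients via the count of odd divisors of $n$; both steps are valid and your argument is complete.
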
  
Beck's conjecture was quickly proved analytically by Andrews \cite{A17}, who additionally showed that this excess also equals the number of partitions of $n$ with exactly one  part  repeated (and all other parts distinct).   
The conjecture was also proved combinatorially by Yang \cite{Yang19} and Ballantine--Bielak \cite{BB19} independently.
This  work was followed by  generalizations  and Beck-type  companions to other well known identities (e.g., \cite{AB19},  \cite{BW21}, \cite{LW20}, \cite{Yang19}).  
In general, a Beck-type companion identity to  $|\mathcal P_{X}(n)|=|\mathcal P_{Y}(n)|$ is an identity that equates the excess of the number of parts in all partitions in $\mathcal P_{X}(n)$ over the number of parts in all partitions in $\mathcal P_{Y}(n)$ to the number of partitions of $n$ satisfying a condition closely related to $X$ (or $Y$). 

In this article, we establish a number of Beck-type identities related to a result of Lehmer, which he informally mentioned at the 1974 International Congress of Mathematicians \cite{G75}:  for every non-negative integer $n$, we have that 
\begin{equation} \label{lehmer1} 2p_e(n,2)=p(n)+q_o(n),
\end{equation} where $$p_{e}(n,2) := p(n \ | \ \text{ the number of even parts is even})$$  and $$q_o(n) := p(n \ | \ \text{ distinct, odd parts}).$$
 Here and throughout we use the  standard notations $p(n)$ and $p(n \ | \  X)$ to denote the number of partitions of $n$, and the number of partitions of $n$ satisfying condition $X$, respectively.  If we also denote by $$p_{o}(n,2) := p(n \ | \ \text{ the number  of even parts is odd}),$$ identity \eqref{lehmer1} is equivalent to the    following statement which we refer to as Lehmer's identity. 
\begin{theorem} \label{lehmer-thm} For any $n\in \mathbb N_0:=\mathbb N \cup \{0\}$, we have \begin{equation}\label{lehmer} p_e(n,2) = p_o(n,2) + q_o(n).\end{equation}
\end{theorem}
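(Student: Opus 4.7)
The plan is to prove Theorem~\ref{lehmer-thm} via generating functions, reducing it to Euler's classical identity. The key observation is that the difference has a natural signed-partition interpretation:
$$p_e(n,2) - p_o(n,2) \;=\; \sum_{\lambda \vdash n} (-1)^{\#\{\text{even parts of }\lambda\}},$$
so weighting each even part by $-1$ in the standard product expansion of $\sum p(n) q^n$ gives
$$\sum_{n \geq 0}\big(p_e(n,2) - p_o(n,2)\big)q^n \;=\; \prod_{k \geq 1}\frac{1}{1-q^{2k-1}} \cdot \prod_{k \geq 1}\frac{1}{1+q^{2k}}.$$
The target is the generating function $\sum_{n}q_o(n)q^n = \prod_{k\geq 1}(1+q^{2k-1})$.

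To finish, I would multiply the desired identity through by $\prod_{k \geq 1}(1+q^{2k})$ and use the elementary grouping
$$\prod_{k\geq 1}(1+q^{2k-1})\prod_{k\geq 1}(1+q^{2k}) \;=\; \prod_{k\geq 1}(1+q^k),$$
reducing everything to Euler's identity $\prod_{k\geq 1}(1-q^{2k-1})^{-1} = \prod_{k\geq 1}(1+q^k)$, which completes the argument. All steps are routine manipulations of infinite products, so there is essentially no analytic obstacle.

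For completeness, one could also aim for a purely combinatorial proof via a sign-reversing involution $\iota$ on partitions of $n$, where the sign is $(-1)^{\#\text{even parts}}$ and the fixed points are precisely the partitions into distinct odd parts. A natural candidate toggles between the smallest even part $2j$ (replace by two copies of $j$) and the smallest repeated odd part $j$ (merge two copies into $2j$), choosing the move that comes first by some canonical ordering on the parts. The main obstacle would be ensuring $\iota$ is well-defined and involutive: one must prescribe the precedence rule carefully so that the split and merge operations are mutually inverse and so that the fixed-point set is exactly $\mathcal{P}$ of distinct odd parts. The generating-function approach above sidesteps this difficulty entirely, so I would lead with it.
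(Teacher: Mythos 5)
Your proposal is correct and is essentially identical to the paper's own proof: both compute the generating function of $p_e(n,2)-p_o(n,2)$ as $(q;q^2)_\infty^{-1}(-q^2;q^2)_\infty^{-1}$, identify $(-q;q^2)_\infty$ as the generating function of $q_o(n)$, and reduce to Euler's identity $(-q;q)_\infty = (q;q^2)_\infty^{-1}$ via the splitting $(-q;q)_\infty = (-q;q^2)_\infty(-q^2;q^2)_\infty$. The sketched sign-reversing involution is an optional extra that the paper does not pursue either (it instead cites Gupta for a combinatorial proof).
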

An analytic proof of Theorem \ref{lehmer-thm} is immediate: The generating series for $p_e(n,2)-p_o(n,2)$ and $q_o(n)$ are ${(q;q^2)^{-1}_\infty(-q^2;q^2)^{-1}_\infty}$  and $(-q;q^2)_\infty$, respectively. Then Theorem \ref{lehmer-thm} follows from the fact that  $$(-q;q^2)_\infty=\frac{(-q;q)_\infty}{(-q^2;q^2)_\infty}$$ and Euler's identity $$(-q;q)_\infty=\frac{1}{(q;q^2)_\infty}.$$  Here and throughout, the $q$-Pochhammer symbol is given by \begin{align*}
& (a;q)_n := \begin{cases}
1, & \text{for $n=0$,}\\
(1-a)(1-aq)\cdots(1-aq^{n-1}), &\text{for $n>0$;}
\end{cases}\\
& (a;q)_\infty := \lim_{n\to\infty} (a;q)_n.
\end{align*}
In \cite{G75}, Gupta   provided a beautiful combinatorial proof of Theorem \ref{lehmer-thm}.   We also note that \eqref{lehmer} is equivalent to the following identity due to  Glaisher (\cite[p.129]{D52} \cite[p.256]{G1876}) 
$$p_e(n)-p_o(n)=(-1)^nq_o(n),$$ where $$p_{e/o}(n) := p(n \ | \ \text{even/odd number of parts}).$$ 

Our first main result, Theorem \ref{beck-lehmer0}  below, is a Beck-type companion identity to Lehmer's identity \eqref{lehmer}.  To state it, we first set some additional notation.  We begin by formally defining a \emph{partition}   $\lambda=(\lambda_1, \lambda_2, \ldots, \lambda_j)$ of \emph{size} $n\in\mathbb N_0$  to be a non-increasing sequence of positive integers $\lambda_1\geq \lambda_2 \geq \cdots \geq \lambda_j$ called \emph{parts} that add up to $n$.  For convenience, we abuse notation and use $\lambda$ to denote either the multiset of its parts or the non-increasing sequence of parts.  We write $a\in \lambda$ to mean the positive integer $a$ is a part of $\lambda$. The empty partition is the only partition of size $0$. Thus, $p(0)=1$. We write $|\lambda|$ for the size of $\lambda$ and  $\lambda\vdash n$  to mean that $\lambda$ is a partition of size $n$. For a pair of partitions $(\lambda, \mu)$ we also write $(\lambda, \mu)\vdash n$ to mean $|\lambda|+|\mu|=n$. 
We use the convention that $\lambda_k=0$ for all $k$ greater than the number of parts. When convenient we will also use the exponential notation for parts in a partition:  the exponent of a part is the multiplicity of the part in the partition, e.g., we write $(a^b)$ for the partition consisting of $b$ parts equal to $a$. 
 Further, we denote by calligraphy style capital letters the set of partitions enumerated by the function denoted by the same letter. For example,  $\mathcal Q_o(n)$ denotes the set of partitions of $n$ into distinct odd parts. We also define $\mathcal Q_o := \bigcup_{n\geq 0}  \mathcal Q_o(n)$.

 \begin{theorem}\label{beck-lehmer0} Let $n\in \mathbb N_0$. The excess of the  number of parts in all partitions in $\mathcal P_e(n,2)$ over the number of parts
 in all partitions in $ \mathcal P_o(n,2)\cup \mathcal Q_o(n)$    equals the number of partitions of $n$ with exactly one even part, possibly repeated, and all other parts odd and distinct. 
 \end{theorem}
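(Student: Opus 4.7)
The plan is to prove Theorem \ref{beck-lehmer0} analytically, in the spirit of the generating-function proof of Lehmer's identity given in the excerpt. Let $E(n)$ denote the excess on the left-hand side of Theorem \ref{beck-lehmer0} and $C(n)$ the number of partitions on the right-hand side. Introducing a marking variable $z$ for the number of parts, define
\begin{equation*}
F(z,q):=\prod_{k\geq 1}\frac{1}{1-zq^{2k-1}}\prod_{k\geq 1}\frac{1}{1+zq^{2k}},\qquad G(z,q):=\prod_{k\geq 1}(1+zq^{2k-1}),
\end{equation*}
so that $F(z,q)$ weights each partition $\lambda$ by $z^{\ell(\lambda)}(-1)^{e(\lambda)}$, where $\ell(\lambda)$ is the number of parts and $e(\lambda)$ the number of even parts (both counted with multiplicity), while $G(z,q)$ generates distinct odd partitions tracking $\ell(\lambda)$. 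Lehmer's identity (Theorem \ref{lehmer-thm}) yields $F(1,q)=G(1,q)=(-q;q^2)_\infty$, and since $\mathcal P_o(n,2)$ and $\mathcal Q_o(n)$ are disjoint, one has $\sum_{n\geq 0}E(n)\,q^n = \bigl(\partial_z F-\partial_z G\bigr)\big|_{z=1}$.

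Applying logarithmic differentiation to $F$ and $G$, evaluating at $z=1$, and using $\tfrac{x}{1-x}-\tfrac{x}{1+x}=\tfrac{2x^2}{1-x^2}$, I would obtain
\begin{equation*}
\sum_{n\geq 0}E(n)\,q^n \;=\; (-q;q^2)_\infty\!\left[\sum_{k\geq 1}\frac{2q^{4k-2}}{1-q^{4k-2}}-\sum_{k\geq 1}\frac{q^{2k}}{1+q^{2k}}\right].
\end{equation*}
A partition counted by $C(n)$ consists of distinct odd parts together with a single even part $2k$ of multiplicity $m\geq 1$, so $\sum_{n\geq 0}C(n)\,q^n=(-q;q^2)_\infty\sum_{k\geq 1}\tfrac{q^{2k}}{1-q^{2k}}$. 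Dividing out the common factor $(-q;q^2)_\infty$, the theorem reduces to the Lambert-series identity
\begin{equation*}
\sum_{k\geq 1}\frac{2q^{4k-2}}{1-q^{4k-2}} \;=\; \sum_{k\geq 1}\frac{q^{2k}}{1-q^{2k}}+\sum_{k\geq 1}\frac{q^{2k}}{1+q^{2k}}.
\end{equation*}

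To verify this last identity, combine the two sums on the right via $\tfrac{x}{1-x}+\tfrac{x}{1+x}=\tfrac{2x}{1-x^2}$ to reduce matters to $\sum_{k\geq 1}\tfrac{q^{4k-2}}{1-q^{4k-2}}=\sum_{k\geq 1}\tfrac{q^{2k}}{1-q^{4k}}$; expanding both sides as geometric series, the coefficient of $q^{2m}$ on each side equals the number of odd divisors of $m$ (on the left via decompositions $m=(2k-1)j$ with $k,j\geq 1$; on the right via $m=k(2j+1)$ with $k\geq 1$, $j\geq 0$). The main obstacle is really just careful bookkeeping: keeping signs straight when differentiating the $1/(1+zq^{2k})$ factors of $F$, and massaging the Lambert series into matching form. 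A combinatorial proof, perhaps by refining Gupta's bijection from \cite{G75} to track part counts, would also be natural to pursue, but it appears considerably more delicate than the analytic route sketched here.
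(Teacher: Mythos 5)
Your proposal is correct and follows essentially the same route as the paper: identical generating functions ($F$ and your $G$ is the paper's $Q_o$), the same logarithmic differentiation at $z=1$, and the same identification of the target series $(-q;q^2)_\infty\sum_{k\geq 1}q^{2k}/(1-q^{2k})$. The only (immaterial) difference is in the final Lambert-series bookkeeping, where the paper telescopes via $\frac{q^k}{1+q^k}=\frac{q^k}{1-q^{2k}}-\frac{q^{2k}}{1-q^{2k}}$ while you reduce to the odd-divisor-counting identity $\sum_{k\geq 1}\frac{q^{4k-2}}{1-q^{4k-2}}=\sum_{k\geq 1}\frac{q^{2k}}{1-q^{4k}}$; both verifications are valid.
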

 
 \begin{remark}
  As proved in \cite{AB19}, the excess in Theorem \ref{beck-lehmer0} is almost always equal to the number of parts in all self-conjugate partitions of $n$.  Hence, the excess  in the number of parts in all partitions in $\mathcal P_e(n,2)$ over the number of parts in all partitions in $\mathcal P_o(n,2)$ is almost always equal to the total   number of parts in all self-conjugate partitions of $n$  and in all partitions of $n$ into distinct odd parts.  More precisely, if $N(x)$ is the number of times the above statement is true for $n\leq x$, then $\lim_{x\to \infty}{N(x)}/{x}=1$.

 \end{remark}

 We also establish a \emph{restricted} Beck-type identity accompanying \eqref{lehmer} in which we only count the number of even parts in partitions in $\mathcal P_e(n,2)$ and $\mathcal P_o(n,2)$; this result is given in Theorem \ref{beck-lehmer} below.   To ease notation in the statement of this result and  other Beck-type identities that follow, we introduce the following definition.   
Let $n, r, a, b$ be non-negative integers such that $1\leq ab\leq n$.  
 We define 
 \begin{align*} \mathcal B_r(n,a,b) := \left \{ \lambda \vdash n-rab \ \Bigg | \ \begin{array}{l}
 \lambda\neq (ra, r(a-2)), \text{ and }  \\ r(a+b+1)\not \in \lambda, \text{ and}  \\ \lambda_1-\lambda_2 \leq 2r(a+b+1)  \text{ or } \lambda_1 = 3r(a+b+1)
 \end{array} \!\!\right\}.
 \end{align*}
We write $\mathcal B(n,a,b)$ for $\mathcal B_1(n,a,b)$.
 
\begin{theorem}\label{beck-lehmer} Let $n\in \mathbb N_0$. The excess of the number of parts in all partitions in $\mathcal Q_o(n)$ plus the number of even parts in all partitions in $\mathcal P_o(n,2)$ over the  number of even parts in all partitions in $\mathcal P_e(n,2)$ equals the number of pairs of partitions $(\lambda, (a^b))$ satisfying the following conditions: 
\begin{itemize}
    \item[i.]  
    $a,b$ are both odd,
    \item[ii.] $\lambda\in \mathcal Q_o\cap \mathcal B(n,a,b)$, i.e., $\lambda$ has distinct odd parts,   is not equal to $(a,a-2)$, does not have $a+b+1$ as a part, and satisfies $\lambda_1-\lambda_2\leq 2(a+b+1)$ or $\lambda_1 = 3(a+b+1)$.
\end{itemize}
\end{theorem}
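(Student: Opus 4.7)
My plan is to prove Theorem~\ref{beck-lehmer} analytically through generating function manipulation, in the spirit of Andrews' proof of Beck's conjecture. I first compute the left-hand side generating function. The sum of the number of parts over $\lambda \in \mathcal{Q}_o(n)$ is the coefficient of $q^n$ in
\[
\frac{\partial}{\partial z}\Big|_{z=1}(-zq;q^2)_\infty = (-q;q^2)_\infty \sum_{k\geq 0} \frac{q^{2k+1}}{1+q^{2k+1}}.
\]
The signed count (sum of even parts over $\mathcal{P}_o(n,2)$ minus that over $\mathcal{P}_e(n,2)$) is the coefficient of $q^n$ in $\frac{\partial}{\partial y}\big|_{y=-1}[(q;q^2)_\infty(yq^2;q^2)_\infty]^{-1}$, where $y$ marks each even part; it evaluates to $[(q;q^2)_\infty(-q^2;q^2)_\infty]^{-1}\sum_{k \geq 1}\frac{q^{2k}}{1+q^{2k}}$. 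Invoking the Euler--Lehmer identity $[(q;q^2)_\infty(-q^2;q^2)_\infty]^{-1} = (-q;q^2)_\infty$ established in the proof of Theorem~\ref{lehmer-thm}, the two contributions combine into the compact form
\[
\mathrm{LHS}(q) = (-q;q^2)_\infty \sum_{k \geq 1}\frac{q^k}{1+q^k}.
\]

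Next, I would match this with the right-hand side generating function
\[
\mathrm{RHS}(q) = \sum_{a,\, b \text{ odd}} q^{ab}\, G_{a,b}(q),
\]
where $G_{a,b}(q)$ is the generating function for $\mathcal{Q}_o \cap \mathcal{B}(n,a,b)$. Expanding $\frac{q^k}{1+q^k} = \sum_{j \geq 1}(-1)^{j-1}q^{jk}$ realizes $\mathrm{LHS}(q)$ as a signed weighted count over triples $(\lambda, k, j)$ with $\lambda \in \mathcal{Q}_o$, $k, j \geq 1$, weight $q^{|\lambda|+jk}$, and sign $(-1)^{j-1}$. Setting $(a,b) = (k,j)$, the triples with both $k, j$ odd carry sign $+1$ and form a ``core'' contribution $\sum_{a,\,b \text{ odd}} q^{ab}(-q;q^2)_\infty$, which over-counts $\mathrm{RHS}(q)$ by precisely those pairs with $\lambda \notin \mathcal{B}(n,a,b)$. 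The remaining triples --- those with at least one of $k, j$ even --- should cancel this overcount, either through a sign-reversing involution on the full set of triples, or through a direct $q$-series manipulation that rewrites $\mathrm{LHS}(q)$ exactly as $\mathrm{RHS}(q)$.

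The main obstacle is constructing such an involution so that each of the three defining conditions of $\mathcal{B}(n,a,b)$ is respected: $\lambda \neq (a,a-2)$, $\lambda_1 - \lambda_2 \leq 2(a+b+1)$, and $a+b+1 \notin \lambda$. A natural candidate combines the parity-swap $(k, j) \leftrightarrow (j, k)$ for mixed-parity triples with $k \neq j$ (which cancels their sign contributions) together with a ``rectangle absorption'' move that transfers two rows or two columns of the rectangle $(k^j)$ into the odd-distinct partition $\lambda$ when an even parameter is present. The fixed points of the involution should correspond exactly to the pairs enumerated in $\mathrm{RHS}(q)$, with each condition of $\mathcal{B}(n,a,b)$ emerging as the precise boundary at which some absorption move becomes undefined: the top-gap bound $\lambda_1 - \lambda_2 \leq 2(a+b+1)$ should delimit where a column can be slid into the top of $\lambda$, the exclusion $a+b+1 \notin \lambda$ should rule out a collision when inserting a new odd part of that size, and $\lambda \neq (a,a-2)$ should be the unique residual fixed point of the swap $(k,j) \leftrightarrow (j,k)$ at $k = j = a$ that would otherwise be double-counted. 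Verifying that every triple is paired and that the fixed-point set matches the enumeration on the right-hand side --- equivalently, verifying the $q$-series identity $\mathrm{LHS}(q) = \mathrm{RHS}(q)$ --- is the crux of the proof.
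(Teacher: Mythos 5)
Your computation of the left-hand side generating function $(-q;q^2)_\infty\sum_{k\geq 1}q^k/(1+q^k)$ matches the paper's first step exactly, and your reduction to a signed count over pairs $(\lambda,(k^j))$ with $\lambda\in\mathcal Q_o$, with the mixed-parity rectangles cancelling via $(k,j)\leftrightarrow(j,k)$ (the paper phrases this as conjugation giving a bijection between $\mathcal P_{o^e}(n)$ and $\mathcal P_{e^o}(n)$), is also the paper's route. What remains after that cancellation is the count of pairs with $k,j$ both odd minus the count with $k,j$ both even, and the entire content of the theorem from that point on is an explicit injection $T$ from the both-even set $A(n)$ into the both-odd set $B(n)$ whose complement $B(n)\setminus T(A(n))$ is cut out precisely by conditions i and ii. You do not construct this injection; you explicitly defer it as ``the crux,'' so the proposal is incomplete at the decisive step. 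Nothing in the generating-function setup forces the three conditions defining $\mathcal B(n,a,b)$ to appear; they are artifacts of one particular choice of injection, and exhibiting that injection (together with its inverse on the image, to certify injectivity and to identify the complement) is unavoidable.

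Your guesses about how the three conditions would emerge also point in the wrong directions, which suggests the sketched ``rectangle absorption'' would not converge to the stated theorem without substantial redesign. In the paper's injection, the exclusion $a+b+1\notin\lambda$ arises because the generic case sends $(\lambda,(a^b))$ with $a,b$ even to $\bigl(\lambda\cup\{a+b-1\},((a-1)^{b-1})\bigr)$, so the image is exactly the set of pairs $(\mu,(c^d))$ with $c+d+1\in\mu$; the gap condition $\mu_1-\mu_2\leq 2(c+d+1)$ is the complement of the image of the case $a+b-1\in\lambda$ with $\lambda$ having at least two parts, where the parts $a+b-1$ and the largest other part $m$ are merged into one part $2a+2b-2+m$ (creating a large gap at the top); and $\mu\neq(c,c-2)$ excludes the image of the degenerate case $\lambda=(a+b-1)$, which is sent to $\bigl((a+1,a-1),((a+1)^{b-1})\bigr)$ --- it is not a ``residual fixed point of the swap at $k=j$'' as you propose (pairs with $k=j$ even are genuine elements of $A(n)$ that must be injected into $B(n)$, not fixed points to be discarded). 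None of this three-case analysis, nor the verification that the three image sets are disjoint and that the map is invertible on its image, is present in your outline.
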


\begin{remark} If $n$ is even, the condition $\lambda \neq (a, a-2)$ in $ii.$ is vacuously true. \end{remark}

In general, whenever we refer to pairs of the form $(\lambda, (a^b))$, we require $(a^b)$ to be nonempty (i.e. $a,b>0$), while $\lambda$ is allowed to be the empty partition.

\begin{remark} \label{rmk_Beckpair} Beck's Conjecture \ref{conj} can also be formulated in the language of pairs as in Theorem \ref{beck-lehmer}:

The excess of the number of parts in all partitions of $n$ into odd parts over the number of parts in all partitions of $n$ into distinct parts equals the number of pairs of partitions $(\lambda, (a^b))\vdash n$ satisfying the following conditions: 

\begin{itemize}
    \item[i.] 
    $a$ is even,
    \item[ii.] $\lambda$ is a partition  into odd parts.
\end{itemize}

\end{remark}

Next, we give a collection of Beck-type companion identities to the following generalization of 
Lehmer's identity \eqref{lehmer}, which we prove in Section \ref{sec_proofs1}. For the remainder of the paper, we let $r \in \mathbb N$. 

\begin{theorem}\label{lehmer-gen} For any $n\in \mathbb N_0$,  we have
\begin{equation} \label{lehmer-g}p_{e}(n,2r)
=  p_{o}(n,2r) + q_o(n,r), 
\end{equation} where
\begin{align*}
p_{e/o}(n,2r) &:= p(n \ | \  \text{all parts allowed, even/odd no.\ of parts divisible by $2r$ }) \\ 
q_o(n,r) &:=p\left(n \ \Big| \ \begin{array}{l}\text{parts are not divisible by $2r$,} \\ \text{parts divisible by $r$ are distinct}\end{array}\right)\\
&=p\left(n  \Big|  \begin{array}{l}\text{all parts divisible by $r$ are distinct, odd multiples of $r$} \end{array}\right).
\end{align*}
\end{theorem}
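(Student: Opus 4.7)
The plan is to prove \eqref{lehmer-g} analytically by a short generating function calculation, reducing it to the same Pochhammer identity that underlies Lehmer's identity \eqref{lehmer}, but with $q$ replaced by $q^r$.

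First I would compute the left-hand generating function by attaching the sign $(-1)^m$ to each multiplicity $m$ of a part divisible by $2r$. Parts not divisible by $2r$ contribute $\prod_{k\not\equiv 0\spmod{2r}}\frac{1}{1-q^k}=\frac{(q^{2r};q^{2r})_\infty}{(q;q)_\infty}$, while parts divisible by $2r$ contribute $\prod_{k\equiv 0\spmod{2r}}\frac{1}{1+q^k}=\frac{1}{(-q^{2r};q^{2r})_\infty}$, giving
$$\sum_{n\geq 0}\bigl(p_e(n,2r)-p_o(n,2r)\bigr)q^n\;=\;\frac{(q^{2r};q^{2r})_\infty}{(q;q)_\infty\,(-q^{2r};q^{2r})_\infty}.$$
For the right-hand side, partitions enumerated by $q_o(n,r)$ have parts not divisible by $r$ (unrestricted), parts that are odd multiples of $r$ (distinct), and no parts divisible by $2r$, so
$$\sum_{n\geq 0}q_o(n,r)\,q^n\;=\;\frac{(q^r;q^r)_\infty\,(-q^r;q^{2r})_\infty}{(q;q)_\infty}.$$

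Next I would cancel the common factor $\frac{1}{(q;q)_\infty}$ and apply the trivial splitting $(q^r;q^r)_\infty=(q^r;q^{2r})_\infty(q^{2r};q^{2r})_\infty$, after which the claim reduces to
$$\frac{1}{(q^r;q^{2r})_\infty\,(-q^{2r};q^{2r})_\infty}\;=\;(-q^r;q^{2r})_\infty,$$
which is exactly the analytic identity behind Lehmer's theorem recalled in the introduction, with $q$ replaced by $q^r$. It follows from Euler's identity $(-q^r;q^r)_\infty=\frac{1}{(q^r;q^{2r})_\infty}$ combined with the splitting $(-q^r;q^r)_\infty=(-q^r;q^{2r})_\infty(-q^{2r};q^{2r})_\infty$.

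Finally, the equivalence of the two characterizations of $q_o(n,r)$ given in the statement is routine: any positive integer divisible by $r$ is either an odd multiple of $r$ or a multiple of $2r$, so requiring parts to be non-multiples of $2r$ with those divisible by $r$ distinct is the same as requiring that parts divisible by $r$ be distinct odd multiples of $r$. There is no real conceptual obstacle here; the takeaway is that Theorem \ref{lehmer-gen} is essentially Lehmer's identity in the variable $q^r$, and the main bookkeeping step is isolating the Pochhammer factor at base $q^r$ so that Lehmer's identity can be applied directly.
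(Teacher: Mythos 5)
Your proposal is correct and follows essentially the same route as the paper: both compute the generating functions $\sum(p_e(n,2r)-p_o(n,2r))q^n=\frac{(q^{2r};q^{2r})_\infty}{(q;q)_\infty(-q^{2r};q^{2r})_\infty}$ and $\sum q_o(n,r)q^n=\frac{(q^{r};q^{r})_\infty}{(q;q)_\infty}(-q^r;q^{2r})_\infty$, and then equate them using the splittings $(\pm q^r;q^r)_\infty=(\pm q^r;q^{2r})_\infty(\pm q^{2r};q^{2r})_\infty$ together with Euler's identity in the variable $q^r$. The only difference is cosmetic bookkeeping in which order the Pochhammer factors are rearranged.
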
 
\noindent Note that for $r=1$, identity~\eqref{lehmer-g} reduces to identity \eqref{lehmer}.

Our first Beck-type companion identity to \eqref{lehmer-g} is given by the next theorem which becomes Theorem \ref{beck-lehmer0} when $r=1$.

\begin{theorem}\label{beck-lehmer2prime}
Let $n\in \mathbb N_0$.  The excess in the total number of parts in all partitions in $\mathcal P_e(n,2r)$ over the total number of parts in all partitions in $\mathcal P_o(n,2r)\cup \mathcal Q_o(n,r)$ equals the number of  pairs of partitions $(\lambda, (a^b))\vdash n$ such that 
\begin{itemize}
    \item[i.]  $2r \mid a$,
    \item[ii.] $\lambda\in \mathcal{Q}_o(n-ab,r)$.
\end{itemize}
\end{theorem}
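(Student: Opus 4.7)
The plan is to prove Theorem \ref{beck-lehmer2prime} by an analytic generating-function argument in the style of Andrews' proof of Beck's Conjecture \ref{conj}: introduce an auxiliary variable $y$ marking the number of parts of a partition, log-differentiate in $y$, and evaluate at $y=1$ to extract sums of part counts.

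Consider the bivariate generating functions
$$F(y,q) := \prod_{k\not\equiv 0\pmod{2r}}\frac{1}{1-yq^k}\cdot\prod_{k\equiv 0\pmod{2r}}\frac{1}{1+yq^k},$$
so that $F(1,q)=\sum_n(p_e(n,2r)-p_o(n,2r))q^n$, and
$$G(y,q) := \prod_{r\nmid k}\frac{1}{1-yq^k}\cdot\prod_{m\geq 0}\bigl(1+yq^{(2m+1)r}\bigr),$$
so that $G(1,q)=\sum_n q_o(n,r)q^n$. Logarithmic differentiation shows that the excess of parts in $\mathcal P_e(n,2r)$ over those in $\mathcal P_o(n,2r)$ has generating series $F(1,q)\cdot A(q)$, and the total number of parts in $\mathcal Q_o(n,r)$ has generating series $G(1,q)\cdot B(q)$, where
$$A(q)=\sum_{k\not\equiv 0\pmod{2r}}\frac{q^k}{1-q^k}-\sum_{k\equiv 0\pmod{2r}}\frac{q^k}{1+q^k},\qquad B(q)=\sum_{r\nmid k}\frac{q^k}{1-q^k}+\sum_{m\geq 0}\frac{q^{(2m+1)r}}{1+q^{(2m+1)r}}.$$
Theorem \ref{lehmer-gen} supplies the key equality $F(1,q)=G(1,q)$, so the excess in the statement of Theorem \ref{beck-lehmer2prime} is generated by $G(1,q)\bigl(A(q)-B(q)\bigr)$.

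On the enumerative side, the pairs $(\lambda,(a^b))\vdash n$ described in the theorem are generated by
$$G(1,q)\cdot\sum_{\substack{a\geq 1\\ 2r\mid a}}\sum_{b\geq 1}q^{ab}=G(1,q)\cdot\sum_{\ell\geq 1}\frac{q^{2r\ell}}{1-q^{2r\ell}}.$$
Splitting the sum $\sum_{k\not\equiv 0\pmod{2r}}$ in $A(q)$ into the cases $r\nmid k$ and $k=(2m+1)r$ causes the $r\nmid k$ contributions in $A(q)$ to cancel against the first sum of $B(q)$, so the theorem reduces to the elementary series identity
$$\sum_{m\geq 0}\frac{2q^{2r(2m+1)}}{1-q^{2r(2m+1)}}=\sum_{\ell\geq 1}\frac{q^{2r\ell}}{1-q^{2r\ell}}+\sum_{\ell\geq 1}\frac{q^{2r\ell}}{1+q^{2r\ell}}=\sum_{\ell\geq 1}\frac{2q^{2r\ell}}{1-q^{4r\ell}}.$$
Matching the coefficient of $q^{2rk}$ on the leftmost and rightmost sides yields $2\,d_{\mathrm{odd}}(k)$, the number of odd divisors of $k$, in both cases, which confirms the identity.

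The main obstacle I anticipate is the bookkeeping: organizing the three classes of parts (non-multiples of $r$, odd multiples of $r$, and even multiples of $r$) so that the Lehmer cancellation $F(1,q)=G(1,q)$ can be exploited cleanly, leaving a tractable series identity. Once the reduction is in hand, the closing divisor-counting step is routine, and I expect no further surprises.
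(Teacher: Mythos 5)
Your proposal is correct and follows essentially the same route as the paper: the paper's $F_r(z;q)$ and $R_r(z;q)$ are exactly your $F(y,q)$ and $G(y,q)$, and its proof likewise differentiates at $z=1$, invokes Theorem \ref{lehmer-gen} to factor out the common series, and reduces the difference of logarithmic derivatives to $\sum_{k\geq 1} q^{2kr}/(1-q^{2kr})$ by elementary manipulation. The only cosmetic difference is that the paper telescopes the Lambert-type sums directly rather than phrasing the last step as a count of odd divisors.
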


\begin{remark}
Equivalently, the excess of Theorem \ref{beck-lehmer2prime} equals the number of partitions of $n$ in which, among the parts divisible by $r$, there is a single even multiple of $r$ and this part is possibly repeated, while all other parts divisible by $r$ are odd multiples of $r$ and they are distinct.
\end{remark}

Theorem \ref{beck-lehmer2} below is a restricted Beck-type companion identity to \eqref{lehmer-g}, in which we only count the number of parts divisible by $r$ in $\mathcal{Q}_o(n,r)$, and the number of parts divisible by $2r$ in $\mathcal{P}_e(n,2r)$ and $\mathcal{P}_o(n,2r)$.
The theorem reduces to Theorem
\ref{beck-lehmer} when $r=1$.

\begin{theorem}\label{beck-lehmer2} 
Let $n\in \mathbb N_0$. The excess of the number of parts divisible by $r$ in all partitions in $\mathcal Q_o(n,r)$ plus the number of parts divisible by $2r$ in all partitions in $\mathcal P_o(n,2r)$ over the  number of parts divisible by $2r$ in all partitions in $\mathcal P_e(n,2r)$ equals the number of pairs of partitions $(\lambda,((ar)^b))$ satisfying the following conditions:
\begin{itemize}
    \item[i.] 
    $a,b$ are both odd,
    \item[ii.]
    $\lambda\in \mathcal Q_o(n-rab,r)$ such that, if we write $\lambda=\lambda^{ndiv}\cup \lambda^{div}$ where $\lambda^{div}$ contains all parts of $\lambda$ that are divisible by $r$, then $\lambda^{div}\in \mathcal B_r(n-|\lambda^{ndiv}|, a, b)$.

\end{itemize}
\end{theorem}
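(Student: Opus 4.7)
My plan is to compute the generating functions on both sides of the identity, reduce the left-hand side using the generalized Lehmer identity (Theorem~\ref{lehmer-gen}) and logarithmic differentiation, and then observe that after factoring out the contribution of parts not divisible by $r$, the resulting identity is precisely the $r=1$ case—that is, Theorem~\ref{beck-lehmer}—under the substitution $q \mapsto q^r$.

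First I would introduce generating functions tracking the relevant part‐counting statistics via a marking variable $z$. For a general partition, marking parts divisible by $2r$, set
\begin{equation*}
F(z,q) = \prod_{2r \nmid k}\frac{1}{1-q^k}\cdot \prod_{j\geq 1}\frac{1}{1-zq^{2rj}},
\end{equation*}
and for a partition in $\mathcal Q_o(n,r)$, marking parts divisible by $r$, set
\begin{equation*}
G(z,q) = \prod_{r\nmid k}\frac{1}{1-q^k}\cdot \prod_{j\text{ odd}}(1+zq^{rj}).
\end{equation*}
Logarithmic differentiation then gives
\begin{equation*}
-z\partial_z F(z,q)\big|_{z=-1} = F(-1,q)\sum_{j\geq 1}\frac{q^{2rj}}{1+q^{2rj}}, \qquad z\partial_z G(z,q)\big|_{z=1} = G(1,q)\sum_{\text{odd }j\geq 1}\frac{q^{rj}}{1+q^{rj}},
\end{equation*}
whose sum is the generating function for the LHS of Theorem~\ref{beck-lehmer2}. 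Combining the identity $F(-1,q) = G(1,q)$ from Theorem~\ref{lehmer-gen} with the elementary decomposition $\sum_{j\geq 1}\frac{q^{rj}}{1+q^{rj}} = \sum_{\text{odd }j}\frac{q^{rj}}{1+q^{rj}} + \sum_{j\geq 1}\frac{q^{2rj}}{1+q^{2rj}}$, the LHS generating function simplifies to
\begin{equation*}
G(1,q)\sum_{j\geq 1}\frac{q^{rj}}{1+q^{rj}} = \prod_{r\nmid k}\frac{1}{1-q^k}\cdot (-q^r;q^{2r})_\infty\sum_{j\geq 1}\frac{q^{rj}}{1+q^{rj}}.
\end{equation*}

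For the RHS, each counted pair $(\lambda,((ar)^b))$ decomposes as $\lambda = \lambda^{ndiv}\cup\lambda^{div}$, with $\lambda^{ndiv}$ unrestricted among partitions into parts not divisible by $r$. Hence the RHS generating function factors as $\prod_{r\nmid k}\frac{1}{1-q^k}\cdot\sum_{a,b\text{ odd}}q^{rab}H_{a,b}(q)$, where $H_{a,b}(q)$ enumerates partitions into distinct odd multiples of $r$ that satisfy the three $\mathcal B_r$ conditions. After cancelling the common factor $\prod_{r\nmid k}\frac{1}{1-q^k}$ (which is invertible in the ring of formal power series) and writing each part of $\lambda^{div}$ as $r\cdot s_i$ with $s_i$ a distinct odd integer, the three defining conditions of $\mathcal B_r$ translate, upon division by $r$, into the three conditions defining $\mathcal B$. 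Under the substitution $Q = q^r$, the identity to prove becomes
\begin{equation*}
(-Q;Q^2)_\infty\sum_{j\geq 1}\frac{Q^j}{1+Q^j} = \sum_{a,b\text{ odd}}Q^{ab}H^*_{a,b}(Q),
\end{equation*}
where $H^*_{a,b}(Q)$ enumerates partitions into distinct odd parts satisfying the $\mathcal B(\cdot,a,b)$ conditions. This is precisely the content of the $r=1$ case, Theorem~\ref{beck-lehmer}.

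The main obstacle is verifying that the $\mathcal B_r$ conditions really do match the $\mathcal B$ conditions under the rescaling $\lambda_i^{div}\leftrightarrow \lambda_i^{div}/r$. Specifically, one must confirm the three bijective correspondences $\lambda^{div}\neq (ra,r(a-2))\leftrightarrow s\neq(a,a-2)$, $\lambda_1^{div}-\lambda_2^{div}\leq 2r(a+b+1)\leftrightarrow s_1-s_2\leq 2(a+b+1)$, and $r(a+b+1)\notin\lambda^{div}\leftrightarrow a+b+1\notin s$, and that the $\lambda^{ndiv}$ factor truly decouples from the rest of the enumeration. Each of these is routine once $Q=q^r$ is substituted, and with these compatibility checks completed, Theorem~\ref{beck-lehmer2} follows immediately from Theorem~\ref{beck-lehmer}.
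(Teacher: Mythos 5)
Your proposal is correct and takes essentially the same route as the paper: the authors likewise differentiate $z$-marked generating functions, use $p_e(n,2r)-p_o(n,2r)=q_o(n,r)$ to collapse the left-hand side to $\frac{(-q^r;q^{2r})_\infty}{(q;q^r)_\infty\cdots(q^{r-1};q^r)_\infty}\sum_{k\geq 1}\frac{q^{kr}}{1+q^{kr}}$, and then invoke the combinatorial interpretation from the proof of Theorem~\ref{beck-lehmer} with $q\mapsto q^r$, with the prefactor accounting for $\lambda^{ndiv}$. Your compatibility checks on the $\mathcal B_r$ versus $\mathcal B$ conditions under rescaling by $r$ are exactly the (routine) verifications the paper leaves implicit.
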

\noindent Recall that $\lambda\cup \mu$  is the partition whose  parts are precisely the parts of $\lambda$ and $\mu$ (with multiplicities).

Next we give another generalization of Lehmer's identity \eqref{lehmer}.    To describe this, we let $r\in\mathbb N$, and   let $  L_r\subseteq \{2,4,6,\dots,2r\}$, with $L_r\neq \varnothing$.   We use the sets $L_r$ to restrict even parts of partitions to lie within certain arithmetic progressions.  More precisely, we
define
\begin{align*}
{p}_{e/o}&(n,  L_r,2r) 
\\ &:= p\left(n \ \Bigg| \  \begin{array}{l}\text{all parts allowed,} \\ \text{even parts $\equiv \ell\spmod{2r}, \ell \in   L_r$,} \\\text{even/odd no.\ of even parts}\end{array}\right),  \\  
{q}(n,  & \ L_r, r) \\ &:=p\left(n \ \Big| \ \begin{array}{l}\text{all parts distinct,} \\\text{even parts   $\not \equiv \ell\spmod{2r},$   $\ell \in  L_r$}\end{array}\right).
\end{align*}

\begin{theorem}\label{lehmer-gen2}
For any $n\in\mathbb N_0$, we have  
\begin{align}\label{lehmer-g2} &{p}_{e}(n,  L_r,2r)
=  {p}_{o}(n,  L_r,2r) + {q}(n, L_r,r).  
\end{align}   
  \end{theorem}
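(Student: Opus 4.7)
The plan is to prove this analytically via generating functions, in the spirit of the analytic proof of Theorem \ref{lehmer-thm} given immediately after its statement. The key observation is that the partitioning of even residues modulo $2r$ into $L_r$ and its complement $\{2,4,\dots,2r\}\setminus L_r$ will combine to give a full product over all even parts, allowing a reduction to Euler's identity.

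First I would compute the generating function for the signed difference $p_e(n,L_r,2r)-p_o(n,L_r,2r)$. Odd parts are unrestricted and contribute $1/(q;q^2)_\infty$. Even parts are required to lie in residues $\ell\pmod{2r}$ with $\ell\in L_r$, and each such part is weighted by $-1$ (so that even multiplicity contributes $+1$ and odd multiplicity contributes $-1$). Grouping even parts by residue class modulo $2r$, this yields
\[
\sum_{n\geq 0}\bigl(p_e(n,L_r,2r)-p_o(n,L_r,2r)\bigr)q^n \;=\;\frac{1}{(q;q^2)_\infty}\prod_{\ell\in L_r}\frac{1}{(-q^\ell;q^{2r})_\infty}.
\]
Similarly, $q(n,L_r,r)$ counts distinct-part partitions whose even parts avoid the residues in $L_r$, so its generating function is
\[
\sum_{n\geq 0}q(n,L_r,r)\,q^n \;=\;(-q;q^2)_\infty\prod_{\ell\in\{2,4,\dots,2r\}\setminus L_r}(-q^\ell;q^{2r})_\infty.
\]

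Next I would show that Theorem \ref{lehmer-gen2} is equivalent to the identity
\[
\frac{1}{(q;q^2)_\infty}\;=\;(-q;q^2)_\infty\prod_{\ell\in\{2,4,\dots,2r\}}(-q^\ell;q^{2r})_\infty,
\]
obtained by multiplying through by $\prod_{\ell\in L_r}(-q^\ell;q^{2r})_\infty$. The residues $\{2,4,\dots,2r\}$ modulo $2r$ represent a complete system of nonzero residues for even positive integers, so the arithmetic progressions $\ell,\ell+2r,\ell+4r,\dots$ with $\ell\in\{2,4,\dots,2r\}$ partition all positive even integers. Hence
\[
\prod_{\ell\in\{2,4,\dots,2r\}}(-q^\ell;q^{2r})_\infty \;=\;(-q^2;q^2)_\infty,
\]
and the identity reduces to $(q;q^2)_\infty^{-1}=(-q;q^2)_\infty(-q^2;q^2)_\infty=(-q;q)_\infty$, which is Euler's identity.

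There is no real obstacle in this argument; the only nontrivial bookkeeping is confirming that the union of arithmetic progressions indexed by $\{2,4,\dots,2r\}$ modulo $2r$ recovers all even positive integers (once each), so that the $L_r$-dependence cancels cleanly. I would note at the end that this proof is uniform in the choice of $L_r\subseteq\{2,4,\dots,2r\}$, which explains why the choice of $L_r$ does not affect the validity of the identity and why the theorem properly generalizes Theorem \ref{lehmer-thm} (recovered when $r=1$ and $L_r=\{2\}$).
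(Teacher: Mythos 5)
Your proposal is correct and follows essentially the same route as the paper: the paper defines the generating functions $E_{r,L_r}(z;q)$ and $Q_{r,L_r}(z;q)$ and asserts that $E_{r,L_r}(1;q)=Q_{r,L_r}(1;q)$ ``after a short calculation using Euler's identity,'' and your argument is precisely that calculation (clearing the $L_r$-dependent factors, recombining the even residue classes modulo $2r$ into $(-q^2;q^2)_\infty$, and invoking Euler's identity). The only difference is that you write out the bookkeeping the paper leaves implicit.
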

\noindent Note that in the case $L_r = \{2,4,\ldots,2r\}$, identity \eqref{lehmer-g2} is equivalent to identity \eqref{lehmer}.

The next theorem is a Beck-type companion identity to \eqref{lehmer-g2}, which becomes Theorem \ref{beck-lehmer0} when $L_r = \{2,4,\ldots,2r\}$.
\begin{theorem}\label{beck-lehmer3prime}  Let $n\in \mathbb N_0$. The excess in the total number of parts in all partitions in $\mathcal P_e(n,L_r,2r)$ over the total number of parts in all partitions in $\mathcal P_o(n,L_r,2r)\cup \mathcal Q(n,L_r,r)$ equals
the number of pairs of partitions $(\lambda,(a^b))$ satisfying the following conditions: 
\begin{itemize}
    \item[i.] $a$ is  even
    \item[ii.] $\lambda\in \mathcal{Q}(n-ab,L_r,r)$.
\end{itemize}
\end{theorem}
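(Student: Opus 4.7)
The plan is to adapt the generating-function and logarithmic-derivative approach that is standard for Beck-type identities. Introduce an auxiliary variable $z$ tracking the number of parts and set
\[
F(z,q) := \prod_{m\text{ odd}}\frac{1}{1-zq^m}\prod_{\ell\in L_r}\prod_{k\geq 0}\frac{1}{1+zq^{\ell+2rk}},\quad G(z,q) := \prod_{m\text{ odd}}(1+zq^m)\prod_{\substack{m\text{ even}\\ m\bmod 2r\notin L_r}}(1+zq^m).
\]
Expanding the products yields $F(z,q) = \sum_{\lambda}(-1)^{m_e(\lambda)}z^{\ell(\lambda)}q^{|\lambda|}$, summed over partitions whose even parts are $\equiv \ell\pmod{2r}$ for some $\ell\in L_r$, with $m_e(\lambda)$ and $\ell(\lambda)$ denoting the number of even parts and the total number of parts respectively, while $G(z,q) = \sum_{\lambda\in\mathcal Q(|\lambda|,L_r,r)}z^{\ell(\lambda)}q^{|\lambda|}$. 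Thus Theorem~\ref{lehmer-gen2} is equivalent to $F(1,q) = G(1,q)$, and the generating function for the excess in Theorem~\ref{beck-lehmer3prime} is $\partial_z(F-G)|_{z=1}$.

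The key computation is $\partial_z\log F|_{z=1}-\partial_z\log G|_{z=1}$. The decisive observation is that the even-indexed sum over the $L_r$-residues coming from $F$ and the complementary even sum coming from $G$ merge into a single sum over all even positive integers, eliminating the dependence on $L_r$:
\[
\partial_z\log F|_{z=1}-\partial_z\log G|_{z=1} = \sum_{m\text{ odd}}\Bigl(\frac{q^m}{1-q^m}-\frac{q^m}{1+q^m}\Bigr)-\sum_{m\text{ even}}\frac{q^m}{1+q^m} = \sum_{m\text{ odd}}\frac{2q^{2m}}{1-q^{2m}}-\sum_{k\geq 1}\frac{q^{2k}}{1+q^{2k}}.
\]
This cancellation is the heart of the proof; it also explains why the theorem imposes no residue condition on $a$ beyond parity.

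It remains to verify the analytic identity
\[
\sum_{m\text{ odd}}\frac{2q^{2m}}{1-q^{2m}}-\sum_{k\geq 1}\frac{q^{2k}}{1+q^{2k}} = \sum_{k\geq 1}\frac{q^{2k}}{1-q^{2k}},
\]
which after the substitution $Q=q^2$ and the elementary identity $\tfrac{Q^k}{1-Q^k}+\tfrac{Q^k}{1+Q^k} = \tfrac{2Q^k}{1-Q^{2k}}$ reduces to $\sum_{m\text{ odd}}\tfrac{Q^m}{1-Q^m} = \sum_{k\geq 1}\tfrac{Q^k}{1-Q^{2k}}$; expanding both sides geometrically shows that each equals $\sum_{N\geq 1}d_o(N)Q^N$, where $d_o(N)$ counts the odd divisors of $N$. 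Combined with $F(1,q)=G(1,q)$, this yields
\[
\partial_z(F-G)|_{z=1}=G(1,q)\cdot \sum_{a\text{ even}}\frac{q^a}{1-q^a},
\]
which is precisely the generating function for pairs $(\lambda,(a^b))$ with $a$ a positive even integer, $b\geq 1$, and $\lambda\in\mathcal Q(n-ab,L_r,r)$. The main obstacle is recognizing the cancellation of the $L_r$-indexed terms in step two; once this is in place, the remainder is the classical divisor identity above.
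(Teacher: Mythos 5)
Your proposal is correct and follows essentially the same route as the paper: your $F$ and $G$ are exactly the paper's ${F}_{r,L_r}(z;q)$ and ${Q}_{r,L_r}(z;q)$, and the logarithmic-derivative computation, the merging of the $L_r$ and $L_r^c$ Lambert sums into $\sum_{m\ge 1}q^m/(1+q^m)$, and the reduction to $Q_{r,L_r}(1;q)\sum_{k\ge1}q^{2k}/(1-q^{2k})$ are precisely what the paper's (terser) proof relies on. You have in effect supplied the details the paper leaves implicit by saying the argument is "similar to the proofs of Theorems \ref{beck-lehmer0} and \ref{beck-lehmer2prime}," and your sign convention $\partial_z(F-G)|_{z=1}$ is the correct one for the stated excess.
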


A restricted Beck-type companion identity to \eqref{lehmer-g2} is given by the next theorem, where we only count the number of even parts in $\mathcal{P}_e(n,L_r,2r)$ and $\mathcal{P}_o(n,L_r,2r)$.
The theorem becomes Theorem \ref{beck-lehmer} when $L_r = \{2,4,\ldots,2r\}$.

\begin{theorem} \label{beck-lehmer3}
Let $n\in \mathbb N_0$. The excess of the number of parts in all partitions in $\mathcal Q(n,L_r,r)$ plus the number of even parts in all partitions in $\mathcal P_o(n,L_r,2r)$ over the  number of even parts in all partitions in $\mathcal P_e(n,L_r,2r)$ equals the number of pairs of partitions $(\lambda, (a^b))$ satisfying the following conditions:
\begin{enumerate}
    \item[i.] 
    $a,b$ are both odd,
    \item[ii.] $\lambda\in \mathcal{Q}(n-ab,L_r,r)$ such that, if we write $\lambda=\lambda^e\cup\lambda^o$, where $\lambda^e$ consists of all the even parts of $\lambda$ and $\lambda^o$ consists of all the odd parts of $\lambda$, then $\lambda^o\in \mathcal B(n-|\lambda^e|, a, b)$.

\end{enumerate}
\end{theorem}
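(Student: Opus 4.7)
The plan is an analytic generating function argument that reduces Theorem~\ref{beck-lehmer3} to the already-stated Theorem~\ref{beck-lehmer}. Let $O$ denote the odd positive integers, set
$$A := \{n \in \mathbb N : n \equiv \ell \pmod{2r} \text{ for some } \ell \in L_r\},$$
the set of even parts admissible in $\mathcal P_{e/o}(n, L_r, 2r)$, and let $B := 2\mathbb N \setminus A$. Each $\lambda \in \mathcal Q(n, L_r, r)$ then decomposes uniquely as $\lambda = \lambda^e \cup \lambda^o$ with $\lambda^e$ having distinct parts in $B$ and $\lambda^o \in \mathcal Q_o$.

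First, I would compute the generating function of the LHS excess. Applying the operator $x\partial_x$ to
$$F(q,x):=\prod_{n\in O}(1-q^n)^{-1}\prod_{n\in A}(1-xq^n)^{-1}$$
and evaluating at $x=-1$ shows that (even parts in $\mathcal P_o(n,L_r,2r)$) minus (even parts in $\mathcal P_e(n,L_r,2r)$) has generating function $F(q,-1)\sum_{n\in A}\frac{q^n}{1+q^n}$. Separately, the number of parts in $\mathcal Q(n,L_r,r)$ has generating function $\prod_{n\notin A}(1+q^n)\sum_{n\notin A}\frac{q^n}{1+q^n}$. Using Theorem~\ref{lehmer-gen2} in the form $F(q,-1)=\prod_{n\notin A}(1+q^n)$ and summing yields
$$\prod_{n\notin A}(1+q^n)\sum_{n\geq 1}\frac{q^n}{1+q^n}$$
for the generating function of the LHS excess.

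For the RHS, I would use that the defining conditions of $\mathcal B(n-|\lambda^e|, a, b)$ depend only on $\lambda^o$, $a$, and $b$, so the generating function factors as
$$\prod_{n \in B}(1+q^n) \cdot \sum_{a,b \text{ odd}} q^{ab}\, G_{a,b}(q),$$
where $G_{a,b}(q)$ enumerates distinct-odd-part partitions $\lambda^o$ with $\lambda^o \neq (a, a-2)$, $\lambda^o_1 - \lambda^o_2 \leq 2(a+b+1)$, and $a+b+1 \notin \lambda^o$. Since $\prod_{n\notin A}(1+q^n)=\prod_{n\in O}(1+q^n)\prod_{n\in B}(1+q^n)$, matching LHS and RHS reduces to
$$\prod_{n\in O}(1+q^n)\sum_{n\geq 1}\frac{q^n}{1+q^n} = \sum_{a,b \text{ odd}}q^{ab}\,G_{a,b}(q),$$
which is precisely the generating function content of Theorem~\ref{beck-lehmer} (the special case $L_r=\{2,4,\ldots,2r\}$, where $B=\varnothing$ and $\mathcal Q(n,L_r,r)=\mathcal Q_o(n)$).

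The main obstacle is verifying that the $\mathcal B$-constraints truly decouple from the even-in-$B$ component $\lambda^e$—so that the factorization of the RHS generating function is legitimate—together with the careful derivative and sign bookkeeping that produces the clean $F(q,-1)$ prefactor in the even-parts excess. Once these are in place, Theorem~\ref{beck-lehmer3} follows immediately from Theorem~\ref{beck-lehmer}.
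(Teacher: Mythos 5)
Your proposal is correct and follows essentially the same route as the paper: both compute the generating function of the excess as $\prod_{j\notin L_r}(-q^j;q^{2r})_\infty\sum_{k\ge1}\frac{q^k}{1+q^k}$, split off the distinct-even-parts factor $\prod_{n\in B}(1+q^n)$, and invoke the combinatorial interpretation of $(-q;q^2)_\infty\sum_{k\ge1}\frac{q^k}{1+q^k}$ already established in the proof of Theorem \ref{beck-lehmer}. The decoupling of the $\mathcal B$-constraints from $\lambda^e$ that you flag as the main obstacle does hold, since those constraints involve only $a$, $b$, and the parts of $\lambda^o$.
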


The next result is a new restricted Beck-type companion identity to Lehmer’s identity \eqref{lehmer}, different from Theorem \ref{beck-lehmer}.
We only count the number of parts in certain arithmetic progressions in $\mathcal{Q}_o(n,2)$, $\mathcal{P}_e(n,2)$ and $\mathcal{P}_o(n,2)$.

To describe it, for $r\in \mathbb N$, let
\begin{align*} 
  L_r\subseteq \{2,4,6,\dots,2r\},\ \ \ 
  O_r \subseteq\{1,3,5,\dots,2r-1\}.  
\end{align*}

\begin{theorem}\label{beck-lehmer4}
 Let $n$ be a positive integer, and $L_r$ and $O_r$ as above such that if $n\equiv 0\pmod 4$ then $2 \not \in L_r$.  The excess of the number of parts $\equiv \ell \pmod{2r}, \ell \in O_r$ in all partitions in $ {\mathcal Q}_o(n)$ plus the number of parts  $\equiv \ell \pmod{2r}, \ell \in L_r$  in all partitions in $\mathcal P_o(n,2)$ over the  number of parts  $\equiv \ell \pmod{2r}, \ell \in L_r$ in all partitions in $\mathcal P_e(n,2)$   equals the number of pairs of partitions $(\lambda, (a^b))\vdash n$ satisfying the following conditions:
\begin{enumerate}
    \item[i.] 
     $a \equiv \ell \pmod{2r}$ for some $\ell \in L_r\cup O_r$, and $b$ is odd. Moreover, if $a$ is odd, then $b=1$,
    \item[ii.] $\lambda\in \mathcal{Q}_o$. Moreover, if $a$ is odd, then $a\not \in \lambda$; if $a$ is even, then $\lambda_1-\lambda_2\leq a$ and $\lambda \not \in \{(\frac{a}{2}+1, \frac{a}{2}-1),(\frac{a}{2}+2, \frac{a}{2}-2) \}$. 
\end{enumerate}

  If $n\equiv 0\pmod 4$ and $2 \in L_r$, the excess is one less than the number of pairs counted above. Moreover, if additionally $n\not\in \{4,8,12,16,20\}$, then the excess is equal to the number of pairs $(\lambda, (a^b))$ satisfying i. and ii.  with the additional condition $(\lambda, (a^b)) \neq ((9,7,5,1),(2^b))$.

\end{theorem}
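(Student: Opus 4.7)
My plan is to prove Theorem \ref{beck-lehmer4} by a generating function argument, in the spirit of Andrews' analytic proof of Beck's conjecture. Write
\[
O := \{k \in \mathbb{N} : k \text{ odd},\ k \bmod 2r \in O_r\}, \qquad
E := \{k \in \mathbb{N} : k \text{ even},\ k \bmod 2r \in L_r\},
\]
and introduce a marker variable $z$ via
\begin{align*}
G(z,q) &:= \prod_{k \in O}(1+zq^k)\prod_{\substack{k\ \text{odd}\\ k \notin O}}(1+q^k), \\
F(z,q) &:= \frac{1}{(q;q^2)_\infty}\prod_{k \in E}\frac{1}{1+zq^k}\prod_{\substack{k\ \text{even}\\ k \notin E}}\frac{1}{1+q^k}.
\end{align*}
Here $G$ generates $\mathcal Q_o$ with $z$ marking parts in $O$, while $F = \sum_\lambda (-1)^{\#\{\text{even parts}\}} z^{\#\{\text{parts in }E\}} q^{|\lambda|}$. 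Lehmer's identity gives $F(1,q)=G(1,q)=(-q;q^2)_\infty$.

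Logarithmic differentiation at $z=1$ yields the generating function for the excess,
\[
\mathcal L(q) := \frac{\partial G}{\partial z}\bigg|_{z=1} - \frac{\partial F}{\partial z}\bigg|_{z=1} = (-q;q^2)_\infty\sum_{k\in O\cup E}\frac{q^k}{1+q^k}.
\]
I then compute the generating function $\mathcal R(q)$ of the pairs on the right-hand side by splitting on the parity of $a$. If $a\in O$ is odd, the condition $b=1$ is forced and removing the factor $1+q^a$ from $(-q;q^2)_\infty$ yields the contribution $\frac{q^a(-q;q^2)_\infty}{1+q^a}$; summed over $a\in O$ this exactly reproduces the $O$-part of $\mathcal L(q)$.

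For $a\in E$ even I first establish, via the bijection that repeatedly subtracts $a$ from $\lambda_1$ (which preserves distinct-odd-partshood since $a$ is even), the identity
\[
Q_a(q) := \sum_{\substack{\lambda\in\mathcal Q_o\\ \lambda_1-\lambda_2\le a}}q^{|\lambda|} = (-q;q^2)_\infty(1-q^a) + q^a.
\]
Combining this with $\sum_{b\ \text{odd}}q^{ab}=\frac{q^a}{1-q^{2a}}$ and accounting for the theorem's exclusion of the two candidates $(\tfrac{a}{2}+1,\tfrac{a}{2}-1)$ and $(\tfrac{a}{2}+2,\tfrac{a}{2}-2)$, the $a$-th contribution to $\mathcal R(q)$ works out to
\[
\frac{q^a(-q;q^2)_\infty}{1+q^a} + \bigl(1-\nu(a)\bigr)\frac{q^{2a}}{1-q^{2a}},
\]
where $\nu(a)$ is the number of the two candidates that is a genuine element of $\mathcal Q_o$ with $\lambda_1-\lambda_2\le a$. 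A short case analysis modulo $4$ shows $\nu(a)=1$ for every even $a\ge 4$, while $\nu(2)=0$ since the candidates for $a=2$ degenerate to $(2)$ (even part) and $(3,-1)$ (not a partition). Therefore
\[
\mathcal R(q)-\mathcal L(q) = \mathbf{1}_{\{2\in L_r\}}\cdot\frac{q^4}{1-q^4},
\]
which establishes the first two assertions of the theorem: equality of the excess with the pair count when $2\notin L_r$ or $n\not\equiv 0\pmod 4$, and a deficit of exactly one otherwise.

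For the sharpened statement I observe that the family $\{((9,7,5,1),(2^b)) : b\ge 1\ \text{odd}\}$ is legitimately counted on the right when $2\in L_r$, since $(9,7,5,1)\in\mathcal Q_o$ satisfies $\lambda_1-\lambda_2=2\le 2$. This family has generating function $\frac{q^{24}}{1-q^4}$, and the polynomial identity
\[
\frac{q^4-q^{24}}{1-q^4}=q^4+q^8+q^{12}+q^{16}+q^{20}
\]
shows that removing this single family from the pair count corrects the discrepancy exactly when $n\equiv 0\pmod 4$ and $n\notin\{4,8,12,16,20\}$. The main obstacle is the bookkeeping surrounding $\nu(a)$: one must verify that among all even $a\in E$ the boundary case $a=2$ is the unique source of a residual term, and then recognize the explicit finite exceptional set produced by the polynomial identity above once $(9,7,5,1)$ is taken out.
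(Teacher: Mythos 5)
Your proposal is correct, and its skeleton coincides with the paper's first proof: you differentiate the same two-variable generating functions to arrive at $(-q;q^2)_\infty\sum_{k\in O\cup E}q^k/(1+q^k)$, which is exactly the paper's \eqref{eqn_QEtilderiv}, and then compare this with the generating function of the pair count. Where you genuinely diverge is in how the even-$a$ contribution is handled. The paper first proves a standalone non-negativity statement (Proposition \ref{lem_ec}, including the delicate table for $\ell=2$) by expanding $(-q;q^2)_\infty=\sum_n q^{n^2}/(q^2;q^2)_n$, and then in Section \ref{sec_eccomb1} interprets the two resulting series \eqref{I} and \eqref{II} combinatorially via self-conjugate partitions. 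Your key identity
\[
\sum_{\substack{\lambda\in\mathcal Q_o\\ \lambda_1-\lambda_2\le a}}q^{|\lambda|}=(-q;q^2)_\infty(1-q^a)+q^a
\]
is the paper's \eqref{eqn_jevenan} in rearranged form, but your one-step ``subtract $a$ from $\lambda_1$'' bijection (a bijection from $\{\lambda\in\mathcal Q_o:\lambda_1-\lambda_2>a\}$ onto the nonempty partitions in $\mathcal Q_o$) is a more elementary derivation of it. Your bookkeeping with $\nu(a)$ --- exactly one of the two excluded candidates lies in $\mathcal Q_o$ with gap at most $a$ for every even $a\ge 4$, and neither does for $a=2$ --- correctly reproduces the paper's $\mu(a)$ analysis, and packaging the entire discrepancy as the exact identity $\mathcal R(q)-\mathcal L(q)=\mathbf{1}_{\{2\in L_r\}}\,q^4/(1-q^4)$ lets you read off all three assertions of the theorem at once, including the $((9,7,5,1),(2^b))$ refinement via $(q^4-q^{24})/(1-q^4)=q^4+q^8+q^{12}+q^{16}+q^{20}$; this is tidier than the paper's passage through the non-negativity proposition, whose table of exceptional $n$ as a function of $r$ your argument simply does not need. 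What you do not supply is anything like the paper's second, purely combinatorial proof (the injection $T_{\ell,r}$ of Section \ref{sec_eccomb2}), but that is an independent alternative rather than a gap in your argument.
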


\begin{remark}
If $n \not\equiv 0 \pmod 4$, then the condition $\lambda \not \in \{(\frac{a}{2}+1, \frac{a}{2}-1),(\frac{a}{2}+2, \frac{a}{2}-2) \}$ is vacuously true. 
\end{remark}

Generally speaking, our proofs are both analytic and combinatorial in nature.    
In Sections \ref{sec_newproof} to \ref{sec_proofs2}, we prove Theorems \ref{beck-lehmer0} through \ref{beck-lehmer3}. 
In Section \ref{sec_proofs3}, 
 we provide two paths to prove Theorem \ref{beck-lehmer4} and give several important examples.  The first proof relies upon the non-negativity of certain  $q$-series coefficients and their combinatorial interpretation, while the second proof establishes a relevant combinatorial injection.
In Section \ref{sec_aux}, we establish the non-negativity of the coefficients of some related $q$-series.

\allowdisplaybreaks
 
\section{Proofs of Theorems \ref{beck-lehmer0} and \ref{beck-lehmer}}\label{sec_newproof}
Consider the generating series

\begin{multline}\nonumber F(z;q):=\frac{1}{(zq;q^2)_\infty(-zq^2;q^2)_\infty}\\ {{=\sum_{n=0}^\infty\sum_{m=0}^\infty \sum_{s=0}^m p(n \ | \ m \text{ parts, of which  } s  \text{ parts are even})(-1)^sz^m q^n, 
}}
\end{multline} 
\begin{multline}\nonumber
    E(z;q):=\frac{1}{(q;q^2)_\infty(-zq^2;q^2)_\infty}\\ {{=\sum_{n=0}^\infty\sum_{m=0}^\infty p(n \ | \ \text{the number of even parts is } m)(-z)^m q^n, 
}}
\end{multline}
and   \begin{multline}\nonumber Q_o(z;q):=(-zq;q^2)_\infty \\ 
{{=\sum_{n=0}^\infty \sum_{m=0}^\infty 
p(n \ | \ \text{parts must be odd and distinct, } m \text{ parts}) z^m q^n
}}
.\end{multline}

To prove Theorem \ref{beck-lehmer0}, note that $\frac{\partial}{\partial z} \big |_{z=1}(F(z;q)  -Q_o(z;q))$ gives the generating series for the excess of the  number of parts in all partitions in $\mathcal P_e(n,2)$ over the number of parts
 in all partitions in $\mathcal Q_o(n) \cup \mathcal P_o(n,2)$.
 We have
 \begin{align*}\frac{\partial}{\partial z} \Big |_{z=1}(F(z;q) & -Q_o(z;q)) \\ 
&=(-q;q^2)_\infty\left(\sum_{k=0}^\infty \frac{q^{2k+1}}{1-q^{2k+1}}-\sum_{k=1}^\infty \frac{q^{2k}}{1+q^{2k}}-\sum_{k=0}^\infty   \frac{q^{2k+1}}{1+q^{2k+1}} \right)\\ 
&=(-q;q^2)_\infty\left(\sum_{k=0}^\infty \frac{q^{2k+1}}{1-q^{2k+1}}-\sum_{k=1}^\infty \frac{q^k}{1+q^{k}}\right)\\
&= (-q;q^2)_\infty\left(\sum_{k=0}^\infty \frac{q^{2k+1}}{1-q^{2k+1}}-\sum_{k=1}^\infty \frac{q^k}{1-q^{2k}}+\sum_{k=1}^\infty \frac{q^{2k}}{1-q^{2k}}\right)\\
&= (-q;q^2)_\infty\left(\sum_{k=1}^\infty \frac{q^{k}}{1-q^{k}}-\sum_{k=1}^\infty \frac{q^k}{1-q^{2k}}\right)\\
&= (-q;q^2)_\infty\sum_{k=1}^\infty \frac{q^{2k}}{1-q^{2k}}.
\end{align*}
The last expression is the generating series for the number of partitions of $n$ with exactly one even part, possibly repeated, and all other parts odd and distinct.   This proves Theorem \ref{beck-lehmer0}.  

To prove Theorem \ref{beck-lehmer}
we  note that      $\frac{\partial}{\partial z} \big |_{z=1}(Q_o(z;q)-E(z;q))$ is the generating series for the excess   of the number of parts in all partitions in $\mathcal Q_o(n)$ plus the number of even parts in all partitions in $\mathcal P_o(n,2)$ over the  number of even parts in all partitions in $\mathcal P_e(n,2)$. 
We compute  \begin{align*}\frac{\partial}{\partial z} \Big |_{z=1}(Q_o(z;q)&-E(z;q))\\
&=(-q;q^2)_\infty\left(\sum_{k=0}^\infty   \frac{q^{2k+1}}{1+q^{2k+1}}+\sum_{k=1}^\infty \frac{q^{2k}}{1+q^{2k}}\right)\\
&=(-q;q^2)_\infty \sum_{k=1}^\infty \frac{q^k}{1+q^k} \\
&= (-q;q^2)_\infty \left(\sum_{k=1}^\infty \frac{q^k}{1-q^{2k}}-\sum_{k=1}^\infty \frac{q^{2k}}{1-q^{2k}}\right). 
\end{align*} 

Let 
\begin{align*}
    p_{e^o}(n):=p(n \ | \ \text{odd number of identical even parts}),
\end{align*}
i.e.,
\begin{align*}
    p_{e^o}(n):=|\{\lambda\vdash n \mid  \lambda= (a^b), a \mbox{ even and } b \mbox{ odd} \}|.
\end{align*}
Define $p_{e^e}(n),p_{o^e}(n)$, and $p_{o^o}(n)$ similarly.

Then
\begin{align*}
    \sum_{k=1}^\infty \frac{q^k}{1-q^{2k}}=\sum_{n=1}^{\infty} (p_{o^o}(n)+p_{e^o}(n)) q^n,
\end{align*}
and
\begin{align*}
    \sum_{k=1}^\infty \frac{q^{2k}}{1-q^{2k}}=\sum_{n=1}^{\infty} (p_{o^e}(n)+p_{e^e}(n)) q^n.
\end{align*}
Since conjugation gives a bijection between $\mathcal{P}_{o^e}(n)$ and $\mathcal{P}_{e^o}(n)$, we further have
\begin{align*}
    \sum_{k=1}^\infty \frac{q^k}{1-q^{2k}}-\sum_{k=1}^\infty \frac{q^{2k}}{1-q^{2k}}
    =\sum_{n=1}^{\infty} (p_{o^o}(n)-p_{e^e}(n)) q^n.
\end{align*}
Therefore  
\begin{align*}(-q;q^2)_\infty \sum_{k=1}^\infty \frac{q^k}{1+q^k}
=&\left(\sum_{n=0}^{\infty} q_o(n) q^n\right)\left(\sum_{n=1}^{\infty} (p_{o^o}(n)-p_{e^e}(n)) q^n\right)\\
=&\sum_{n=1}^{\infty}\left(\sum_{m=0}^{n-1}q_o(m)p_{o^o}(n-m)-q_o(m)p_{e^e}(n-m)\right)q^n,
\end{align*} 
and the excess in question is given by  $$\sum_{m=0}^{n-1}\left(q_o(m)p_{o^o}(n-m)-q_o(m)p_{e^e}(n-m)\right).$$ 

Equivalently, this is the excess of the number of elements in
$$B(n):=\{(\lambda,(a^b))\vdash n \ | \ \lambda \in \mathcal Q_o,  a, b \text{ odd}\}$$ 
over that in
$$A(n):=\{(\lambda,(a^b))\vdash n\ | \ \lambda \in \mathcal Q_o,  a, b \text{ even}\}.$$
To measure this excess, we construct an injection $T$ from $A(n)$ to $B(n)$ as follows. We partition the set $A(n)$ into three disjoint subsets:
\begin{align*}
    &A_1(n):=\{(\lambda,(a^b))\in A(n) \mid a+b-1 \not \in \lambda \};\\
    &A_2(n):=\{(\lambda,(a^b))\in A(n) \mid a+b-1  \in\lambda  \text{ and $\lambda$ has at least two parts}\};\\
    &A_3(n):=\{(\lambda,(a^b))\in A(n) \mid \lambda=(a+b-1) \}.
\end{align*}
We define $T$ on each $A_i(n)$ in the following way.

\begin{enumerate}
    \item If $(\lambda, (a^b))\in A_1(n)$ (including the case where $\lambda$ is empty), then $$T(\lambda, (a^b)):=\left(\lambda\cup\{a+b-1\}, \left((a-1)^{b-1}\right)\right).$$ 
    
    \item If $(\lambda, (a^b))\in A_2(n)$, then let $m$ denote the largest part of $\lambda$ that is not $a+b-1$ and define
    $$T(\lambda, (a^b)):=\left((\lambda\setminus\{m, a+b-1\})\cup(2a+2b-2+m), \left((a-1)^{b-1}\right)\right),$$ where $\lambda\setminus\{m, a+b-1\}$ is the partition obtained by removing parts $a+b-1$ and $m$  from $\lambda$. 
    
    \item If $(\lambda, (a^b))\in A_3(n)$, then $T(\lambda, (a^b)):=\left((a+1,a-1), \left((a+1)^{b-1}\right)\right)$. 
\end{enumerate}
 The image sets are thus 
\begin{align*}
    &T(A_1(n))=\{(\mu,(c^d))\in B(n) \ | \ c+d+1\in \mu\};\\
    &T(A_2(n))=\left\{(\mu,(c^d))\in B(n) \ \Big | \ \begin{array}{ll}c+d+1\not \in\mu, \, \mu_1\neq 3(c+d+1), \\ \text{and } \mu_1-\mu_2 >2(c+d+1)\end{array}\right\};\\
    &T(A_3(n))=\{(\mu,(c^d))\in B(n) \ | \ \mu=(c,c-2)\}.
\end{align*} 

Note that $T(A_1(n)),~T(A_2(n)),$ and $T(A_3(n))$ are disjoint, and their union $T(A(n))$ is a subset of $B(n)$. Define the map $L$ from $T(A(n))$ to $A(n)$ as follows:
\begin{enumerate}
    \item If $(\mu, (c^d))\in T(A_1(n))$, then $$L(\mu, (c^d)):=\left(\mu\setminus\{c+d+1\}, \left((c+1)^{d+1}\right)\right).$$ 
    
    \item If $(\mu, (c^d))\in T(A_2(n))$, then $$L(\mu, (c^d)):=\left((\mu\setminus\{\mu_1\})\cup\{c+d+1,\mu_1-2(c+d+1)\}, \left((c+1)^{d+1}\right)\right).$$ 
    
    \item If $(\mu, (c^d))\in T(A_3(n))$, then $$L(\mu, (c^d)):=\left((c+d-1), \left((c-1)^{d+1}\right)\right).$$ 
\end{enumerate}
Then   $L$ and $T$ are inverses of each other. Since $T$ gives a bijection between $A(n)$ and $T(A(n))\subseteq B(n)$, the excess in question is given by the number of elements in  
\begin{align*}
   B(n)&\setminus T(A(n))=B(n)\setminus(T(A_1(n))\cup T(A_2(n))\cup T(A_3(n)))\\
   &= \left\{(\mu,(c^d))\in B(n)\ \Big |\  \begin{array}{l}c+d+1\not \in \mu, \mu\neq (c,c-2)\text{, and}\\ \mu_1-\mu_2\leq 2(c+d+1)  \text{ or } \mu_1=3(c+d+1)\end{array}\right\},\\
   &= \left\{(\mu,(c^d))\in B(n)\ \Big |\   \mu \in \mathcal B(n,c,d) \right\}.
\end{align*} 
Theorem \ref{beck-lehmer} now follows.

\section{Proofs of Theorems \ref{lehmer-gen}, \ref{beck-lehmer2prime}, and \ref{beck-lehmer2}}\label{sec_proofs1}
For $r\in\mathbb N$, we define 
\begin{align*}
    F_r(z;q) &:= \frac1{(zq;q^{2r})_\infty(zq^2;q^{2r})_\infty \cdots (zq^{2r-1};q^{2r})_\infty\cdot(-zq^{2r};q^{2r})_\infty} \\
    &=\sum_{n=0}^\infty \sum_{m=0}^\infty \sum_{s=0}^m p\left(n \ \Bigg | \ \begin{array}{l}\text{all parts allowed,} \\m \text{ parts, }\\ s \text{ parts divisible by } 2r \end{array} \right)(-1)^s z^mq^n, \\ \ \\ 
R_r(z;q) &:=  \frac{(-zq^r;q^{2r})_\infty}{(zq;q^r)_\infty(zq^2;q^r)_\infty \cdots (zq^{r-1};q^r)_\infty} \\
&=\sum_{n=0}^\infty \sum_{m=0}^\infty  p\left(n \ \left| \ \begin{array}{l}\text{parts are not divisible by $2r$,}\\m\text{ parts,} \\ \text{parts divisible by $r$ are distinct} \end{array}\right.\right) z^m q^n.
\end{align*}
Hence, the generating series for $p_e(n,2r)-p_o(n,2r)$ and $q_o(n,r)$ are $F_r(1;q)$ and $R_r(1;q)$, respectively. 
We have 
\begin{align*}
F_r(1;q) 
&= \frac{ (q^{2r}; q^{2r})_\infty}{(q;q)_\infty} \cdot \frac1{(-q^{2r};q^{2r})_\infty }
= \frac{ (q^{2r}; q^{2r})_\infty}{(q;q)_\infty} \cdot \frac{(-q^r;q^{2r})_\infty}{(-q^r;q^r)_\infty}\\
&= \frac{ (q^{2r}; q^{2r})_\infty}{(q;q)_\infty} \cdot (q^r;q^{2r})_\infty \cdot (-q^r;q^{2r})_\infty 
= \frac{ (q^{r}; q^{r})_\infty}{(q;q)_\infty} \cdot  (-q^r;q^{2r})_\infty\\
&= R_r(1;q).
\end{align*}
Here we used the fact 
\[(-q^{2r};q^{2r})_\infty (-q^r;q^{2r})_\infty = (-q^r;q^r)_\infty \text{ and } (q^{2r};q^{2r})_\infty (q^r;q^{2r})_\infty = (q^r;q^r)_\infty \]
 in the second and fourth equality respectively, and used Euler’s identity 
 \[
 (-q;q)_\infty = \frac{1}{(q;q^2)_\infty}
 \]
 (by replacing $q$ by $q^r$) in the third equality. 
 Theorem \ref{lehmer-gen} now follows.
  
To prove Theorem \ref{beck-lehmer2prime},
  we have that  $\left.\frac{\partial}{\partial z}\right|_{z=1}(F_r(z;q)-R_r(z;q))$ is the generating series for the excess of the total number of parts in all partitions in $\mathcal{P}_e(n,2r)$ over the total number of parts in all partitions in $\mathcal{P}_o(n,2r)\cup \mathcal{Q}_o(n,r)$.
We have
\begin{align}
\notag   &\frac{\partial}{\partial z}\Big|_{z=1}(F_r(z;q)-R_r(z;q))  \\\notag 
   &= R_r(1;q)\Bigg(\sum_{\ell=1}^{2r-1}\sum_{k=0}^\infty\frac{q^{\ell+2kr}}{1-q^{\ell+2kr}}-\sum_{k=1}^\infty\frac{q^{2kr}}{1+q^{2kr}}
   \\\notag&{\hspace{1.35in}}-\sum_{\ell=1}^{r-1}\sum_{k=0}^\infty\frac{q^{\ell+kr}}{1-q^{\ell+kr}}-\sum_{k=0}^\infty\frac{q^{r+2kr}}{1+q^{r+2kr}}\Bigg) \\\notag
   &=R_r(1;q) \left( \sum_{\ell=1}^{2r-1}\sum_{k=0}^\infty\frac{q^{\ell+2kr}}{1-q^{\ell+2kr}}-\sum_{\ell=1}^{r-1}\sum_{k=0}^\infty\frac{q^{\ell+kr}}{1-q^{\ell+kr}}-\sum_{k=1}^\infty\frac{q^{kr}}{1+q^{kr}}\right) \\\notag
   &=R_r(1;q) \Bigg( \sum_{\ell=1}^{2r-1}\sum_{k=0}^\infty\frac{q^{\ell+2kr}}{1-q^{\ell+2kr}}-\sum_{\ell=1}^{r-1}\sum_{k=0}^\infty\frac{q^{\ell+kr}}{1-q^{\ell+kr}}\\\notag&\hspace{1.35in}-\sum_{k=1}^\infty\frac{q^{kr}}{1-q^{2kr}}+\sum_{k=1}^\infty\frac{q^{2kr}}{1-q^{2kr}}\Bigg) \\\notag
   &=R_r(1;q)\left(\sum_{k=1}^\infty \frac{q^k}{1-q^k} -\sum_{\ell=1}^{r-1}\sum_{k=0}^\infty\frac{q^{\ell+kr}}{1-q^{\ell+kr}}-\sum_{k=1}^\infty\frac{q^{kr}}{1-q^{2kr}} \right) \\\notag
   &=R_r(1;q)\left(\sum_{k=1}^\infty\frac{q^{kr}}{1-q^{kr}}-\sum_{k=1}^\infty\frac{q^{kr}}{1-q^{2kr}} \right) \\\label{eqn_Rr}
   &=R_r(1;q) \sum_{k=1}^\infty \frac{q^{2kr}}{1-q^{2kr}}.
\end{align}
This is the generating series for the number of pairs of partitions $(\lambda, (a^b))\vdash n$ so that 
\begin{itemize}
    \item[i.]  $2r\mid a$,
    \item[ii.] $\lambda\in \mathcal{Q}_o(n-ab,r)$.
\end{itemize}  Equivalently, \eqref{eqn_Rr} is the generating series for the number of partitions of $n$ in which among the parts divisible by $r$ there is exactly one even multiple of $r$, possibly repeated, and all other parts divisible by $r$ are odd multiples of $r$ and are distinct.
This proves Theorem \ref{beck-lehmer2prime}.

 To prove Theorem \ref{beck-lehmer2}, we define
\begin{align*}  
E_r(z;q) &:=
\frac1{ (q;q^{2r})_\infty (q^2;q^{2r})_\infty \cdots (q^{2r-1};q^{2r})_\infty  \cdot (-zq^{2r};q^{2r})_\infty }
 \\ &=\sum_{n=0}^\infty \sum_{m=0}^\infty p(n \ | \ \text{all parts allowed, } m \text{ parts divisible by } 2r  )(-z)^m q^n,
 \\  
Q_r(z;q)&:=\frac{(-z q^r;q^{2r})_\infty}{(q;q^r)_\infty(q^2;q^r)_\infty\dots(q^{r-1};q^r)_\infty} \\
&= \sum_{n=0}^\infty \sum_{m=0}^\infty p\left(n \ \Bigg | \ \begin{array}{l}\text{parts are not divisible by $2r$,} \\ \text{parts divisible by $r$ are distinct,} \\ 
m \text{ parts divisible by $r$}\end{array}\right) z^m q^n.
\end{align*}
As in the proof of Theorem \ref{beck-lehmer2prime}, we compute
\begin{align}\label{eqn_becklehmer}
\frac{\partial}{\partial z} \Big |_{z=1} (Q_r(z;q) - E_r(z;q)) = \frac{(- q^r;q^{2r})_\infty}{(q;q^r)_\infty(q^2;q^r)_\infty\cdots(q^{r-1};q^r)_\infty} \sum_{k=1}^\infty \frac{q^{kr}}{1+q^{kr}}.
\end{align}
In the proof of Theorem \ref{beck-lehmer}, we have shown that 
\begin{equation}
\label{eqn_r=1}
    (-q;q^2)_\infty \sum_{k=1}^\infty \frac{q^{k}}{1+q^k}
\end{equation}
is the generating series for the number of pairs of partitions $(\lambda, (a^b))\vdash n$ satisfying the following conditions: 
\begin{itemize}
    \item[i.]  
    $a,b$ are both odd,
    \item[ii.] $\lambda\in \mathcal Q_o\cap \mathcal B(n,a,b)$. 
\end{itemize}
For each $r\in \mathbb{N}$, replacing $q$ by $q^r$ in \eqref{eqn_r=1} implies that $$(-q^r;q^{2r})_\infty \sum_{k=1}^\infty \frac{q^{kr}}{1+q^{kr}}$$ is the generating series for the number of  pairs $(\lambda^{div}, ((ar)^b))\vdash n$
satisfying the following conditions: 
\begin{itemize}
    \item[i.]  
    $a,b$ are both odd,
    \item[ii.] $\lambda^{div}\in \mathcal Q_o(n-rab,r)\cap \mathcal B_r(n,a,b)$ and every part of $\lambda^{div}$ is divisible by $r$.
\end{itemize}
Theorem~\ref{beck-lehmer2} follows from equation~\eqref{eqn_becklehmer}.

\section{Proofs of Theorems \ref{lehmer-gen2},  \ref{beck-lehmer3prime}, and \ref{beck-lehmer3}}
\label{sec_proofs2}
For $r\in \mathbb N, L_r\subseteq \{2,4,\ldots,2r\}$ as in Section \ref{sec_intro}, we define
\begin{align*}
{E}_{r, L_r}(z;q) &:= \frac{1}{\displaystyle (q;q^2)_{\infty} \prod_{\ell \in   L_r}(-zq^{\ell};q^{2r})_\infty}\\
&= \sum_{n=0}^\infty \sum_{m=0}^\infty p\left(n \ \Bigg | \ \begin{array}{l} \text{all odd parts allowed,} \\ \text{even parts $\equiv \ell\spmod{2r}, \ell \in   L_r$,}\\ \text{$m$ even parts}\end{array}\right)(-z)^m q^n, 
\\ 
{F}_{r, L_r}(z;q) &:= \frac{1}{\displaystyle (zq;q^2)_{\infty} \prod_{\ell \in   L_r}(-zq^{\ell};q^{2r})_\infty}\\
&=\sum_{n=0}^\infty  \sum_{m=0}^\infty \sum_{s=0}^m p\left(n \ \Bigg | \ \begin{array}{l} \text{all odd parts allowed,}\\\text{even parts  $\equiv \ell\spmod{2r}, \ell \in   L_r$,}\\ \text{$m$ parts, $s$ even parts}\end{array}\right)(-1)^s z^m q^n,
\\ 
{Q}_{r,  L_r}(z;q)&:=\mathop{\prod_{j=1}^{2r}}_{j\not\in   L_r}(-zq^j;q^{2r})_\infty\\
&=\sum_{n=0}^\infty \sum_{m=0}^\infty p\left(n \ \Bigg| \ \begin{array}{l}\text{all odd parts allowed,}\\ \text{even parts  $\not\equiv \ell\spmod{2r},$   $\ell \in   L_r$,}\\ m\text{ parts, all distinct,}\\\end{array}\right) z^m q^n.
\end{align*}

Theorem \ref{lehmer-gen2} now follows  from the fact that $E_{r,L_r}(1;q) = Q_{r,L_r}(1;q)$, which is not difficult to obtain after a short calculation using Euler's identity.  
   
  The proof of Theorem \ref{beck-lehmer3prime} is similar to the proofs of Theorems \ref{beck-lehmer0} and \ref{beck-lehmer2prime}, and can be seen from 
\begin{align*} 
\frac{\partial}{\partial z} \Big |_{z=1} & ({Q}_{r, L_r}(z;q) - {F}_{r,  L_r}(z;q)) = Q_{r,L_r}(1;q) \sum_{k=1}^\infty \frac{q^{2k}}{1-q^{2k}}.  \end{align*}

To prove Theorem \ref{beck-lehmer3}, we compute

\begin{align} 
\frac{\partial}{\partial z} \Big |_{z=1} & ({Q}_{r, L_r}(z;q) - {E}_{r,  L_r}(z;q)) \nonumber\\&=\mathop{\prod_{j=1}^{2r}}_{j\not\in   L_r}(-q^j;q^{2r})_\infty  \left(\sum_{k=1}^\infty \frac{q^{k}}{1+q^{k}}\right) \nonumber\\
    &=
    \left(\mathop{\prod_{j=1}^{2r}}_{j \text{ even, } j \not\in   L_r}(-q^j;q^{2r})_\infty \right)   (-q;q^{2})_\infty   \sum_{k=1}^\infty \frac{q^{k}}{1+q^{k}}. \label{product}
\end{align}
Using the combinatorial interpretation of \eqref{eqn_r=1} in the proof of Theorem \ref{beck-lehmer}, Theorem~\ref{beck-lehmer3} follows from \eqref{product}.

\section{Proof of Theorem \ref{beck-lehmer4}}\label{sec_proofs3}

Let $r\in\mathbb N$, $L_r\subseteq\{2,4,\ldots,2r\}$ and $O_r\subseteq\{1,3,\ldots,2r-1\}$ as in Section \ref{sec_intro}.
Also let $L_r^c = \{2,4,\ldots, 2r\}\setminus L_r$ and $O_r^c = \{1,3,\ldots, 2r-1\}\setminus O_r$.
Define
\begin{align*}
\widetilde{E}_{r, L_r}(z;q) &:=
\frac{1}{\displaystyle (q;q^2)_\infty\prod_{j\in L_r^c}(-q^j;q^{2r})_\infty 
 \prod_{\ell\in L_r}(-zq^\ell;q^{2r})_\infty} \\
 &=\sum_{n=0}^\infty \sum_{m=0}^\infty (p_{e}(n,m;L_r)-p_{o}(n,m;L_r)) z^m q^n,
\\
\widetilde{Q}_{r, O_r}(z;q) &:=  \prod_{j \in  O_r^c}(-q^j;q^{2r})_\infty  \prod_{\ell \in  O_r}(-zq^\ell;q^{2r})_\infty\\
&=\sum_{n=0}^\infty \sum_{m=0}^\infty 
q_o(n,m;O_r) z^m q^n,
\end{align*}
where 
\begin{align*} p_{e/o}(n,m;L_r) &:= 
p\left(n \ \Bigg | \ \begin{array}{l} \text{all parts allowed,} \\
\text{even/odd no.\ of even parts,} \\ 
\text{$m$ (even) parts $\equiv \ell\spmod{2r}, \ell \in   L_r$}\end{array}\right), \\
q_o(n,m;O_r) &:= p\left(n \ \Big | \ \begin{array}{l} \text{all parts odd and distinct,} \\ 
\text{$m$ (odd) parts $\equiv \ell \spmod{2r}, \  \ell \in O_r$}\end{array}\right).
\end{align*}
When $z=1$, $\widetilde{E}_{r, L_r}(1;q)=\widetilde{Q}_{r, O_r}(1;q)$ recovers Lehmer’s identity \eqref{lehmer} in Theorem \ref{lehmer-thm}.

We compute that
\begin{align}\label{eqn_QEtilderiv}
    \frac{\partial}{\partial z} \Big |_{z=1} (\widetilde{Q}_{r, O_r}(z;q) - \widetilde{E}_{r,  L_r}(z;q)) &=(-q;q^2)_\infty \sum_{ \ell \in  L_r \cup   O_r}\mathop{\sum_{k=0}^\infty} \frac{q^{2kr+\ell}}{1+q^{2kr+\ell}}.
\end{align}

To prove Theorem \ref{beck-lehmer4}, it suffices to prove the case where $L_r\cup O_r=\{\ell\}$ for each positive integer $\ell\leq 2r$.
In Section \ref{sec_ec}, we state and prove Proposition \ref{lem_ec}, which establishes the non-negativity of the $q$-series coefficients of the series in  \eqref{eqn_QEtilderiv} (noting that special case $\ell=2$ is more delicate).   Then we provide two different proofs of Theorem \ref{beck-lehmer4}:  the first proof in Section \ref{sec_eccomb1} makes use of Proposition \ref{lem_ec} and its proof,  while the second proof in Section \ref{sec_eccomb2} establishes a relevant combinatorial  injection and is independent of Proposition \ref{lem_ec}.  

\subsection{Non-negativity of $q$-series coefficients}\label{sec_ec}
We use the notation $F(q)\succeq 0$ to mean that the coefficients of  $F(q)$ when expanded as a $q$-series are all non-negative.
\begin{proposition} \label{lem_ec} Let $r \in \mathbb N$, and $\ell$ a positive integer such that $\ell\leq 2r$.

If $\ell \neq 2$, then 
$$(-q;q^2)_\infty  \mathop{\sum_{k=0}^\infty} \frac{q^{2kr+\ell}}{1+q^{2kr+\ell}}\succeq 0.$$

If $\ell=2$, then the only possible negative coefficients of
\begin{align}\label{def_ell2} (-q;q^2)_\infty  \mathop{\sum_{k=0}^\infty} \frac{q^{2kr+2}}{1+q^{2kr+2}}\end{align}
(when expanded as a $q$-series) are the coefficients of $q^4, q^8, q^{12}, q^{16},$ and $q^{20}$, and any such negative coefficient is equal to $-1$.  Precisely,  
the set of all $n$ such that the coefficient of $q^n$ in \eqref{def_ell2} is negative (and thus  equal to $-1$) is given as a function of $r$ in the following table:

$$\begin{array}{|l|l|}  \hline
r &  \{n\}  \\ \hline 1 & \{ \ \}   \\ \hline
2,4,7 & \{4,8,12\} \\  \hline
3 & \{4\}\\  \hline
5 & \{4,8\}\\ \hline
6,9 & \{4,8,12,16\}\\ \hline
8, \text{or} \geq 10 & \{4,8,12,16,20\} \\ \hline
\end{array}$$
\end{proposition}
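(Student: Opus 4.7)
The plan is to split the sum into individual terms $T_e(q):=(-q;q^2)_\infty\cdot\frac{q^e}{1+q^e}$ indexed by $e=2kr+\ell$, and to analyze each according to whether $e$ is odd, even with $e\geq 4$, or equals $2$. For $e$ odd, the factor $1+q^e$ divides $(-q;q^2)_\infty=\prod_{j\geq 0}(1+q^{2j+1})$, so $T_e(q)=q^e\prod_{j:\,2j+1\neq e}(1+q^{2j+1})\succeq 0$ trivially. For $e\geq 4$ even, I would expand $\frac{q^e}{1+q^e}=\sum_{j\geq 1}(-1)^{j-1}q^{je}$ so that the coefficient of $q^n$ in $T_e(q)$ is $\sum_{j\geq 1}(-1)^{j-1}q_o(n-je)$, then group consecutive terms as pairs $q_o(n-(2i-1)e)-q_o(n-2ie)$ (with at most a non-negative leftover $+q_o(n-Je)$ when the number of terms $J$ is odd). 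Non-negativity of each pair reduces to the pointwise inequality $q_o(m)\geq q_o(m-e)$ for $m\geq e$, which I would establish via the injection $\phi_e:\mathcal Q_o(m-e)\hookrightarrow\mathcal Q_o(m)$ that replaces the largest part $p$ of $\lambda$ by $p+e$, with the empty partition (occurring only when $m=e$) sent to $\{e-1,1\}\in\mathcal Q_o(e)$; this image lies outside the image of $\phi_e$ on non-empty partitions, whose largest parts are all $\geq 1+e>e-1$.

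The delicate case is $e=2$, where $\phi_2$ fails at $m=2$, reflecting $q_o(2)=0<1=q_o(0)$. The same pairing, in which each $q_o(m)-q_o(m-2)$ with $m\geq 3$ is non-negative by $\phi_2$, leaves $q_o(2)-q_o(0)=-1$ as the only defective pair, and this pair appears in the sum exactly when $n\equiv 0\pmod 4$. For such $n$, the coefficient of $q^n$ in $T_2(q)$ equals $-1+\sum_{i'\in\{3,5,\ldots,n/2-1\}}\big(q_o(2i')-q_o(2i'-2)\big)$, each summand non-negative. A short direct check (for instance, by listing partitions of integers $\leq 22$ into distinct odd parts) verifies that $q_o(2i')-q_o(2i'-2)=0$ for $i'\in\{3,5,7,9\}$ and that $q_o(22)-q_o(20)=1$. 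Hence the coefficient of $q^n$ in $T_2(q)$ equals $-1$ for $n\in\{4,8,12,16,20\}$, equals $0$ at $n=24$, and is $\geq 0$ for every $n\geq 24$, since the positive pair at $i'=11$ enters as soon as $n\geq 24$ and all subsequent pairs are non-negative.

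With these three building blocks, the proposition follows by summation. If $\ell\neq 2$, every $e=2kr+\ell$ is either odd or even with $e\geq 4$, so each $T_e(q)\succeq 0$ and the full series $(-q;q^2)_\infty\sum_k\frac{q^{2kr+\ell}}{1+q^{2kr+\ell}}$ is non-negative. If $\ell=2$, the only potentially negative term is the $k=0$ term $T_2(q)$, since for $k\geq 1$ one has $e=2kr+2\geq 2r+2\geq 4$; consequently every negative coefficient of the full series occurs at some $n\in\{4,8,12,16,20\}$ with magnitude at most $1$. To produce the table, for each $r$ and each such $n$ I would compute the finitely many contributions of $T_{2kr+2}$ with $1\leq k\leq (n-2)/(2r)$ using the expansion $\sum_j(-1)^{j-1}q_o(n-je)$; the $-1$ from $T_2$ survives at $q^n$ in the full series iff these auxiliary contributions sum to $0$, and a uniform row-by-row check reproduces the stated table.

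The main obstacle is the sharp analysis of $T_2(q)$: the argument requires both the non-negativity of every pair $q_o(2i')-q_o(2i'-2)$ for odd $i'\geq 3$ (handled by $\phi_2$) and the precise claim that the first strictly positive such pair occurs at $i'=11$, giving $q_o(22)-q_o(20)=1$. This single combinatorial fact is what ensures the defective $-1$ no longer persists from $n\geq 24$ onward and isolates the negative coefficients of $T_2(q)$ to exactly $\{4,8,12,16,20\}$. Everything else reduces to routine finite bookkeeping.
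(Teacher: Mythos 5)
Your proposal is correct, and it shares the paper's overall skeleton (term-by-term analysis of $(-q;q^2)_\infty\frac{q^e}{1+q^e}$ for $e=2kr+\ell$, split into the cases $e$ odd, $e$ even with $e\geq 4$, and $e=2$, with the odd case handled identically), but the two substantive steps are carried out by genuinely different means. For even $e\geq 4$ the paper writes $\frac{q^e}{1+q^e}=(1-q^e)\frac{q^e}{1-q^{2e}}$ and reduces to $(-q;q^2)_\infty(1-q^e)\succeq 0$, which is coefficientwise exactly your inequality $q_o(m)\geq q_o(m-e)$; it proves this analytically via $(-q;q^2)_\infty=\sum_{n\geq 0}q^{n^2}/(q^2;q^2)_n$, locating for each $m$ a term $q^{2t+u^2}$ (with $u$ even and $u^2$ the largest even square at most $m$) that cancels $-q^{m}$. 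Your largest-part injection $\phi_e$ (with $\varnothing\mapsto(e-1,1)$, valid precisely because $e\geq 4$) proves the same inequality purely combinatorially and is more elementary; it is in the spirit of the injections the paper constructs in Sections \ref{sec_newproof} and \ref{sec_aux}. For $e=2$ the paper expands $(-q;q^2)_\infty(1-q^2)\cdot\frac{q^2}{1-q^4}$ explicitly through $q^{20}$ and disposes of the tail by writing each $m\equiv 0\pmod 4$ with $m\geq 24$ as $(3+1)^2+6+(2+4c)$; your telescoping identity, giving the coefficient as $-1$ plus the non-negative differences $q_o(2i')-q_o(2i'-2)$ over odd $i'$ with $3\leq i'\leq n/2-1$, reaches the same conclusion once one checks that these differences vanish for $i'\in\{3,5,7,9\}$ and that $q_o(22)-q_o(20)=8-7=1$ (the surplus partition $(9,7,5,1)\vdash 22$ behind your $+1$ is exactly the exceptional pair singled out in Theorem \ref{beck-lehmer4}, and corresponds to the paper's term $q^{16+6}$). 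Your reduction of the table to finite bookkeeping matches the paper's: for $k\geq 1$ the terms are $O(q^{2kr+2})$, so for $r\geq 10$ nothing corrects the five $-1$'s, and small $r$ is a direct computation. The one item you should not leave implicit in a final write-up is the list of values $q_o(4)=q_o(6)=1$, $q_o(8)=q_o(10)=2$, $q_o(12)=q_o(14)=3$, $q_o(16)=q_o(18)=5$, $q_o(20)=7$, $q_o(22)=8$, since the precise location of the negative coefficients rests entirely on it.
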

 
\begin{proof}[Proof of Proposition \ref{lem_ec}]   We divide our proof into three cases:  $\ell$ odd, $\ell$ even but $\ell \neq 2$, and $\ell=2$.

For $0<\ell\leq 2r$ odd, we have that  \begin{align}(-q;q^2)_\infty\sum_{k=0}^{\infty} \frac{q^{2kr+\ell}}{1+q^{2kr+\ell}}
=\sum_{k=0}^{\infty} \mathop{\prod_{m=0}^\infty}_{2m+1\neq 2kr+\ell}q^{2kr+\ell}(1+q^{2m+1}) \succeq 0 \label{leftoversGF}.
\end{align}  

For $0<\ell\leq 2r$ even, we note that
\begin{equation}
\begin{split}\label{eqn_cbconj}
    (-q;q^2)_\infty\sum_{k=0}^{\infty} \frac{q^{2kr+\ell}}{1+q^{2kr+\ell}}=&\sum_{k=0}^{\infty}(-q;q^2)_\infty(1-q^{2kr+\ell}) \frac{q^{2kr+\ell}}{1-q^{2(2kr+\ell)}}.
\end{split}\end{equation}

We first assume that $\ell\neq 2.$  Using \eqref{eqn_cbconj}, it suffices to show that
$(-q;q^2)_\infty (1-q^{2a}) \succeq 0$   for any integer $a\geq 2$.    We apply the well-known identity (see, e.g., \cite[(2.2.6) with $q \mapsto q^2, t\mapsto q$]{AndEncy})
$$ (-q;q^2)_\infty  = \sum_{n=0}^\infty \frac{q^{n^2}}{(q^2;q^2)_n}$$ to  re-write
\begin{align}\nonumber (-q;q^2)_\infty (1-q^{2a}) & = (1-q^{2a})\sum_{n=0}^\infty \frac{q^{n^2}}{(q^2;q^2)_n} \\\nonumber
&= (1-q^{2a}) + \frac{(1-q^{2a})}{(1-q^2)}\left( \sum_{n=1}^\infty \frac{q^{n^2}}{(q^4;q^2)_{n-1}}\right) \\\label{eqn_jevenan}
&= 1-q^{2a} + \left(\sum_{t=0}^{a-1} q^{2t} \right)\left(\sum_{n=1}^\infty \frac{q^{n^2}}{(q^4;q^2)_{n-1}}\right).
\end{align}  Thus, it suffices to show that the coefficient of $q^{2a}$  in the $q$-series expansion of \begin{align}\label{eqn_jevenexpproof} \left(\sum_{t=0}^{a-1} q^{2t} \right)\left(\sum_{n=1}^\infty \frac{q^{n^2}}{(q^4;q^2)_{n-1}}\right)\end{align} is strictly positive. We re-write $2a= u^2 + v$, where $u^2$ is the largest even perfect square at most equal to $2a$ (with $u$  a non-negative even integer), and $v$ is a non-negative integer.  Note that $u^2 \geq 4$ (so $u\geq 2$), since    $2a \geq 4$. Since $u$ is even, $v$ is even, and since $u^2\geq 4$, we have that $0\leq v \leq 2(a-2)$.     That is, $v=2t$ for some $0\leq t \leq a-2$. For this $t$, we consider  
\begin{align}\label{eqn_tconj}q^{2t} \sum_{n=1}^\infty \frac{q^{n^2}}{(q^4;q^2)_{n-1}},\end{align} which appears in \eqref{eqn_jevenexpproof}.  We extract the $n=u$ term 
${q^{u^2}}/{(q^4;q^2)_{u-1}}$
from the sum in \eqref{eqn_tconj}, noting that $u\geq 2$.    Expanding this as a $q$-series, we obtain  
\begin{align}\label{eqn_tconj2}  \frac{q^{u^2}}{(q^4;q^2)_{u-1}} = q^{u^2}+\Sigma_u(q),\end{align} where $\Sigma_u(q)\succeq 0$. 
Multiplying \eqref{eqn_tconj2} by $q^{2t}$, we find the term $q^{2t+u^2} = q^{2a}$ in the $q$-expansion of \eqref{eqn_jevenexpproof};  moreover, we have explained that $\Sigma_u(q) \succeq 0$, and it is clear that the remaining coefficients in the $q$-expansion of \eqref{eqn_jevenexpproof} are non-negative.  This  completes the proof of non-negativity  in the case of even $\ell \neq 2 $.

When $\ell=2$, we begin with the identity  in \eqref{eqn_jevenan}, which also holds for $a=1$. 
In this case, the $q$-series expansion for the expression in \eqref{eqn_jevenexpproof} is $q+q^4+O(q^8)$ (and has non-negative coefficients);  that is,   the coefficient of $q^2$ is   $0$.  Thus, the $q$-expansion of   \eqref{eqn_jevenan} in this case is
 $1+q-q^2+q^4+O(q^8)$, and has non-negative coefficients for all powers of $q$ greater than $4$.  Referring to \eqref{eqn_cbconj}, 
 as above, we have that 
 \begin{align}\label{eqn_jksum} \sum_{k=1}^{\infty}(-q;q^2)_\infty(1-q^{2kr+\ell}) \frac{q^{2kr+\ell}}{1-q^{2(2kr+\ell)}} \succeq 0 
    \end{align}  for any even $\ell \in L_r$, including $\ell=2$. 
 We also have from the above 
  that the $k=0$ term from \eqref{eqn_cbconj} satisfies
    \begin{align}\label{eqn_jkzero} (-q;q^2)_\infty(1-q^{\ell}) \frac{q^{\ell}}{1-q^{2\ell}}\succeq 0 \end{align}  for  even $\ell\geq 4$.  For $\ell=2$, we have shown that the only negative term appearing in the $q$-expansion  
    \begin{align}\label{eqn_j2exp}  (-q;q^2)_\infty(1-q^{2}) = 1+q-q^2+q^4+q^8+q^9+q^{12}+q^{13}+q^{15}+2q^{16}+O(q^{17}) \end{align} is $-q^{2}.$ 
    We multiply \eqref{eqn_j2exp}  by 
    \begin{align}\label{eqn_j2exp2}  \frac{q^{2}}{1-q^{4}} = q^2+q^6 + q^{10}+q^{14}+q^{18}+q^{22}+O(q^{26})\end{align} 
    (which  clearly has non-negative $q$-series coefficients) 
    to obtain 
    \begin{align}\notag q^2+q^3&-q^4+2 q^6+q^7-q^8+3 q^{10}+2 q^{11}-q^{12}+4 q^{14}\\\label{eqn_j2exp3}  &+3 q^{15}-q^{16}+q^{17}+6 q^{18}+4 q^{19}-q^{20} +  \Sigma(q),\end{align} where $\Sigma(q) = O(q^{21})$.  We now argue that $\Sigma(q) \succeq 0 $.  It is not difficult to see that the only  powers of $q$ in the expansion for $\Sigma(q)$ which may possibly have negative coefficients   are those $q^m$ such that  $m\equiv 0 \pmod 4$,  $m\geq 24$   (where we have also used that $\Sigma(q) = O(q^{21})$).      Now, 
   any $m\equiv 0 \pmod{4}$ such that $m\geq 24$ can also be written   as  $m=(3+1)^2+6+(2+4c)$ for some integer $c\geq 0$.   Thus, we also obtain the term 
   $+q^m$ after multiplying  \eqref{eqn_j2exp} and \eqref{eqn_j2exp2} as follows. We use the expression in \eqref{eqn_jevenan} (with $a=1$ and $t=0$) for \eqref{eqn_j2exp}, and take the numerator 
   $q^{(3+1)^2}$   of the $n=3$ term  and also $q^6$ from the expansion of the denominator $(q^4;q^2)_3$ of that same term. This yields a term $q^{(3+1)^2+6}$ after multiplying.   We now multiply by the term  $q^{2+4c}$  from the expansion of  $q^2/(1-q^4)$  in \eqref{eqn_j2exp2}.  Overall, this yields after multiplication the term $q^{(3+1)^2+6+(2+4c)}=q^m$, which cancels with the earlier $-q^m$.   This shows that  $\Sigma(q)\succeq 0$.  
    
   Thus, the only negative coefficients of the series in 
   \eqref{eqn_j2exp3} are  $q^{4}, q^8, q^{12}, q^{16},$ and $q^{20}$, and these coefficients are all equal to $-1$.  When added to the rest of the sum in \eqref{eqn_jksum} (which has non-negative coefficients), 
  this argument shows  that the only  powers of $q$ in the expansion of \eqref{def_ell2} (equivalently, \eqref{eqn_cbconj} with $\ell=2$) with potentially negative coefficients are  $q^4, q^8, q^{12}, q^{16}, q^{20}$, and that any such negative coefficient must be $-1$, as claimed.
   Moreover, for $r\geq 10$, and any $k\geq 1$, we have that   
   $$ \frac{q^{2kr+2}}{1-q^{2(2kr+2)}} = O(q^{22}),$$ which, when combined with the above argument, proves that the coefficients of $q^4, q^8, q^{12}, q^{16},$ and $q^{20}$ are all equal to $-1$.  The remaining negative coefficients as given in the table in Proposition \ref{lem_ec} for $1\leq r \leq 9$ are easily calculated directly. 
   
\end{proof}

\subsection{Combinatorial interpretation of Proposition \ref{lem_ec}}\label{sec_eccomb1}

In this section, we give a combinatorial interpretation of   the coefficient of $q^n$ in the $q$-series of Proposition \ref{lem_ec} in terms of the number of pairs of partitions $(\lambda, (a^b))\vdash n$  satisfying certain conditions.
This will complete the first proof of Theorem \ref{beck-lehmer4}.

Let $\ell$ be a positive integer such that $\ell\leq 2r$. 

If $\ell$ is odd, then  the coefficient of $q^n$ in  \eqref{leftoversGF}  is  the number of pairs of partitions $(\lambda, (a))\vdash n$   satisfying $a \equiv \ell \pmod{2r}$,  $\lambda\in \mathcal{Q}_o$, and   $a\not\in\lambda$.

If $\ell$ is even, $\ell \neq 2$, we substitute \eqref{eqn_jevenan} in \eqref{eqn_cbconj}  to obtain

 \begin{align} \nonumber  (-q;q^2)_\infty& \sum_{k=0}^{\infty} \frac{q^{2kr+\ell}}{1+q^{2kr+\ell}} =   \\   \label{I} & \sum_{\substack{a
 \equiv \ell\spmod{2r}\\ a > 0}}\left(1+\left(\sum_{t=0}^{\frac{a}{2}-1} q^{2t}\right)\left( \sum_{n=0}^\infty \frac{q^{(n+1)^2}}{(q^4;q^2)_{n}}\right)\right) \frac{q^{a}}{1-q^{2a}}\\ \nonumber - \\  
 \label{II}  &   \sum_{\substack{a
 \equiv \ell\spmod{2r}\\ a > 0}}q^{a} \frac{q^{a}}{1-q^{2a}}.  
 \end{align}
The $q$-series $$\sum_{n=0}^\infty \frac{q^{(n+1)^2}}{(q^4;q^2)_{n}}$$ is the generating series for the number of   self-conjugate partitions of $n\geq 1$ which are either $(1)$ or whose  smallest part is at least $2$. Thus, this is also  the generating series of the number of partitions of $n\geq 1$ into distinct odd parts which are either $(1)$ or partitions whose first two parts differ by exactly $2$. Moreover, $$\sum_{t=0}^{\frac{a}{2}-1} q^{2t}$$ is the generating series for partitions of $n$ into a single even part no larger than $a-2$. 

By adding a non-negative  even integer no larger than $a-2$ to $(1)$ or the first part of a partition into distinct odd parts whose first two parts differ by exactly $2$, we see that 
$$\left(\sum_{t=0}^{\frac{a}{2}-1} q^{2t}\right)\left( \sum_{n=0}^\infty \frac{q^{(n+1)^2}}{(q^4;q^2)_{n}}\right)$$
is the generating series for the number of partitions $\lambda$ of $n\geq 1$  into  distinct odd parts with $\lambda_1-\lambda_2\leq a$.  Note that $\lambda_2$ can be $0$.

Thus, \eqref{I} is the generating series for the number of pairs of partitions $(\lambda, (a^b))\vdash n$ with $b$ odd,  $a \equiv \ell \pmod{2r}$, and  $\lambda \in \mathcal  Q_o$  satisfying $\lambda_1-\lambda_2\leq a$. Note that $\lambda$ may be empty.

To interpret  \eqref{II}, for $a\geq 4$ and even, define $\mu(a)\in \mathcal Q_o(a)$ to be the partition  \begin{equation}\label{def_mu}
\mu(a):=\begin{cases} (\frac{a}{2}+1, \frac{a}{2}-1) & \mbox{ if $\frac{a}{2}$ is even}\\ (\frac{a}{2}+2, \frac{a}{2}-2) & \mbox{ if $\frac{a}{2}$ is odd}. 
\end{cases}
\end{equation}
Thus, \eqref{II} is the generating series for the number of pairs of partitions $(\mu(a), (a^b))\vdash n$ with $b$ odd,  $a \equiv \ell \pmod{2r}$.

Therefore,  the coefficient of $q^n$ in $$(-q;q^2)_\infty\sum_{k=0}^{\infty} \frac{q^{2kr+\ell}}{1+q^{2kr+\ell}}$$ is equal to  the number of the pairs of partitions $(\lambda, (a^b))\vdash n$ with $b$ odd,  $a \equiv \ell \pmod{2r}$, and  $\lambda \in \mathcal Q_o$ satisfying $\lambda_1-\lambda_2\leq a$ and $\lambda\neq\mu(a)$.

If $\ell=2$, the argument above fails only in the interpretation of $q^{a}\frac{q^{a}}{1-q^{2a}}$ when $a=2$. However, this $q$-series is just $\sum_{k=1}^\infty q^{4k}$. Thus, if $\ell=2$ and  $n \equiv 0\pmod 4$, the coefficient of $q^n$ in \eqref{eqn_cbconj} is one less than the number of pairs of partitions described above. If $n\geq 24$, $n \equiv 0 \pmod 4$, then $((9,7,5,1), (2^{(n-22)/2}))$ is a pair counted by the sequence whose generating series is \eqref{I}. Thus, if $n \not \in\{4,8,12,16,20\}$, the coefficient of $q^n$ in \eqref{eqn_cbconj} is non-negative and equal to the number of pairs of partitions $(\lambda, (a^b))\vdash n$ with $b$ odd,  $a \equiv \ell \pmod{2r}$, and $\lambda \in \mathcal Q_o$  satisfying $\lambda_1-\lambda_2\leq a$, $\lambda\neq \mu(a)$ and, if $n\equiv 0 \pmod 4$, also $(\lambda, (a^b))\neq ((9,7,5,1), (2^{(n-22)/2}))$.

If $\ell=2$, $r\geq 10$, and $n\in\{4,8,12,16,20\}$, then  $q^n$ appears only when $a=2$ in \eqref{I}.  If $b$ is odd, $n-2b \equiv 2\pmod 4$ and  $n-2b\leq 18$.  Thus, a partition $\lambda\vdash n-2b$ into distinct parts would have two parts. However, since  $n-2b \equiv 2\pmod 4$, it follows that $\lambda_1-\lambda_2\geq 4$. Therefore, there are no pairs of partitions $(\lambda, (a^b))$ counted by the sequence whose generating series is \eqref{I} and the coefficient of $q^n$ in \eqref{eqn_cbconj} is $-1$. For $r<10$, one can easily verify that the values of $n$  giving negative coefficients are as in the statement of the theorem.

\subsection{An alternate proof} \label{sec_eccomb2}
In this section, we provide an alternate proof of Theorem~\ref{beck-lehmer4}. This proof is independent of Proposition~\ref{lem_ec} and additionally proves the combinatorial interpretation in Section~\ref{sec_eccomb1}.

Recall that to prove Theorem \ref{beck-lehmer4}, it suffices to interpret \eqref{eqn_QEtilderiv} combinatorially in the case where $L_r\cup  O_r = \{\ell\}$. 
Then \eqref{eqn_QEtilderiv} becomes
\begin{equation}
\begin{split}
    &(-q;q^2)_\infty\sum_{k=0}^{\infty} \frac{q^{2kr+\ell}}{1+q^{2kr+\ell}}\\ \label{eqn_lsplit}
    =&(-q;q^2)_\infty\sum_{k=0}^{\infty} \frac{q^{2kr+\ell}}{1-q^{2(2kr+\ell)}}-(-q;q^2)_\infty\sum_{k=0}^{\infty} \frac{q^{2(2kr+\ell)}}{1-q^{2(2kr+\ell)}}\\
    =:&\sum_{n=0}^{\infty}c_{\ell,r}(n)q^n, 
\end{split}
\end{equation}

{

From \eqref{eqn_lsplit}  we see that $c_{\ell,r}(n)$ is the excess of the number of elements in $$B_{\ell,r}(n):=\left\{(\lambda,(a^b))\vdash n\ \Big |  \begin{array}{l} \lambda \in \mathcal Q_o,  b \text{ odd}, a\equiv \ell\spmod{2r}\end{array}\!\right\}$$
over that in
$$A_{\ell,r}(n):=\left\{(\lambda,(a^b))\vdash n\ \Big |  \begin{array}{l} \lambda \in \mathcal Q_o,  b \text{ even}, a\equiv \ell\spmod{2r}\end{array}\!\right\}.$$
We create an injection $T_{\ell,r}$ from $A_{\ell,r}(n)$ into $B_{\ell,r}(n)$ as follows. 

\textbf{If $\ell$ is odd:} For $(\lambda, (a^b))$ in $A_{\ell,r}(n)$, define $$T_{\ell,r}(\lambda, (a^b))= \begin{cases} (\lambda\cup \{a\},(a^{b-1}))  & \mbox{ if $a \not \in\lambda$},\\
(\lambda \setminus \{a\}, (a^{b+1}))  & \mbox{ if $a\in \lambda$}.
\end{cases}$$

 Then,  $c_{\ell,r}(n)=|B_{\ell,r}(n)\setminus T_{\ell}(A_{\ell,r}(n))|$, i.e., the number  of pairs of partitions $(\lambda, (a))\vdash n$ such that
\begin{enumerate}
    \item[(I$_o$)]  $a \equiv \ell \pmod{2r}$
    \item[(II$_o$)] $\lambda\in \mathcal{Q}_o$  such that $a$ is not a part of $\lambda$.    
\end{enumerate}

\textbf{If $\ell$ is even:} For $(\lambda, (a^b))$ in $A_{\ell,r}(n)$ with $(\lambda, a)\neq (\varnothing, 2)$, define 
$$T_{\ell,r}(\lambda, (a^b))= \begin{cases} (\lambda\setminus\{\lambda_1\}\cup \{\lambda_1+a\}, (a^{b-1})) & \mbox{ if } \lambda\neq\varnothing\\
(\mu(a), (a^{b-1})) & \mbox{ if } \lambda=\varnothing,
\end{cases}$$
where $\mu(a)$ was defined in \eqref{def_mu}.

When $n\not\equiv 0\pmod 4$ or $\ell\neq 2$, we have $(\lambda, a)\neq (\varnothing, 2)$ for all $(\lambda, (a^b))$ in $A_{\ell,r}(n)$. Then $c_{\ell,r}(n)=|B_{\ell,r}(n)\setminus T_{\ell,r}(A_{\ell,r}(n))|$, i.e., the number  of pairs of partitions $(\lambda, (a^b))\vdash n$ such that 
\begin{enumerate}
    \item[(I$_e$)] $a \equiv \ell \pmod{2r}$  and $b$ is odd. 
    \item[(II$_e$)] $\lambda\in \mathcal{Q}_o$ such that $\lambda_1-\lambda_2\leq a$, $\lambda \neq \mu(a)$.
\end{enumerate}

If $n\equiv 0\pmod 4$ and $\ell =2$, then $(\varnothing, 2^{n/2})\in A_{\ell,r}(n)$. If $n\geq 24$, we define 

\begin{equation*}\label{eq:imageT}
    T_{\ell,r}(\varnothing, (2^{n/2}))= ((9,7,5,1),(2^{(n-22)/2})).
\end{equation*}

  If $n\not \in \{4,8,12,16,20\}$, then $c_\ell(n)=|B_{\ell,r}(n)\setminus T_{\ell,r}(A_{\ell,r}(n))|$, i.e., the number  of pairs of partitions $(\lambda, (a^b))\vdash n$ satisfying i. and ii. above and the additional condition
\begin{enumerate}
    \item[(III$_e$)] If $a=2$, then $\lambda\neq (9,7,5,1)$.
\end{enumerate}

If $n\in \{4,8,12,16,20\}$ there is no obvious image of  $(\varnothing, 2^{n/2})$ under $T_{\ell,r}$ so that the transformation remains injective. In this case, $c_{\ell,r}(n)$ is one less than the number of pairs of partitions $(\lambda, (a^b))\vdash n$ satisfying i. and ii. above. Depending on $r$ and $n$, in this case $c_{\ell,r}(n)$ could be $-1$.

}

\begin{remark} 
For sufficiently large $n$, the coefficients of $q^n$ are strictly positive.    We explain  this more precisely below.   

Suppose $\ell$ is odd and $n\geq \ell+8$.  Let  $d:=n-\ell$.   Then,    if $d$ is even, we have $(d-1, 1), (d-3, 3)\in \mathcal Q_o(d)$, and if $d$ is odd, we have $(d), (d-4, 3,1)\in \mathcal Q_o(d)$. Since pairs of partitions in each case have disjoint sets of parts, there is a pair $(\lambda, (\ell))\vdash n$ satisfying (I$_o$) and (II$_o$).

Suppose $\ell$ is even and $n\geq \ell+19$, and let $d:=n-\ell$ as above. The following pairs $(\lambda, \ell)$ satisfy conditions (I$_e$) and (II$_e$).

If $d\equiv 0\pmod 4$, let $\lambda:=(\frac{d}{2}-1,\frac{d}{2}-3, 3,1)$. 

If $d\equiv 1\pmod 4$, let $\lambda:=(\frac{d+1}{2},\frac{d-3}{2},1)$. 

If $d\equiv 2\pmod 4$, let $\lambda:=(\frac{d}{2}-2,\frac{d}{2}-4, 5,1)$. 

If $d\equiv 3\pmod 4$, let $\lambda:=(\frac{d-1}{2},\frac{d-5}{2}, 3)$. 

If $n\geq 28$ and $d\equiv 2 \pmod 4$ with $\ell=2$, then  (III$_e$) is also satisfied.
\end{remark}

\subsection{Examples of Theorem \ref{beck-lehmer4}}
In this section, we give  some examples of Theorem \ref{beck-lehmer4}  for specific choices of $L_r$ and $O_r$  in which the excess is non-negative for all $n$.     Example 1 gives a new interpretation for the excess studied in Theorem \ref{beck-lehmer}. Examples 2 and 3, when specialized to $r=1$, are also related to the excess studied in Theorem \ref{beck-lehmer}.
\subsubsection{Example 1: $L_r\cup O_r = \{1,\ldots,2r\}$}
The excess in this case is the same as that in Theorem \ref{beck-lehmer}, but the combinatorial description given by Theorem \ref{beck-lehmer} and Theorem \ref{beck-lehmer4} are different.

When $n\not\equiv 0 \pmod 4$, Theorem \ref{beck-lehmer4} becomes:
The excess of the number of parts in all partitions in $\mathcal{Q}_o(n)$ plus the number of even parts in all partitions of $\mathcal{P}_e(n,2)$ over the number of even parts in all partitions in $\mathcal{P}_o(n,2)$ equals the number of pairs of partitions $(\lambda, (a^b))\vdash n$ satisfying the following conditions: 
\begin{itemize}
    \item[i.] $b$ is odd. Moreover, if $a$ is odd, then $b=1$,
    \item[ii.] $\lambda\in\mathcal{Q}_o$. Moreover, if $a$ is odd, then $a\not \in \lambda$; if $a$ is even, then $\lambda_1-\lambda_2\leq a$.
\end{itemize}

When $n\in\{4,8,12,16,20\}$, since $2\in L_r\cup O_r$, Theorem \ref{beck-lehmer4} does not guarantee the non-negativity of the excess because we could not define our injection on $(\varnothing,(2^{n/2}))$.
However, since $L_r\supseteq \{2,4,\ldots, 2r\}$, when $n\equiv 0\pmod{4}$, we can map $(\varnothing, (2^{n/2}))$ to $(\varnothing, (n) )$, proving non-negativity of the excess. Specifically, the excess is now equal to the number of pairs of partitions $(\lambda,(a^b))\vdash n$ satisfying the following conditions:
\begin{itemize}
    \item[i.] $b$ is odd. Moreover, if $a$ is odd, then $b=1$,
    \item[ii.] $\lambda\in\mathcal{Q}_o$. Moreover, if $a$ is odd, then $a\not \in \lambda$; if $a$ is even, then $\lambda_1-\lambda_2\leq a$ and $\lambda\neq\mu(a) \}$; if $a \equiv 0 \pmod 4$ and $b=1$, then $\lambda\neq\varnothing$.
\end{itemize}

With the modified injection, we have the following corollary of Theorem \ref{beck-lehmer4}:
\begin{corollary}\label{cor-even} The excess of the number of even parts in all partitions of $\mathcal P_o(n,2)$ over the  number of even parts  in all partitions of $\mathcal P_e(n,2)$ equals the number of partitions $\lambda$ of $n$ such that exactly one part is even, all other parts are odd and distinct, the even part may be repeated an odd number of times and, if we write $\lambda=(\lambda^o\cup ((2k)^b))$ with $\lambda^o\in\mathcal{Q}_o$, $k\geq 1$, then $\lambda^o_1-\lambda^o_2 \leq 2k, \lambda^o\neq\mu(2k)$, and if $k$ is even and $b=1$ then $\lambda^o\neq \varnothing$. \end{corollary}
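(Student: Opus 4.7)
The plan is to deduce Corollary \ref{cor-even} from Example 1 by splitting the pairs $(\lambda,(a^b))$ counted there according to the parity of $a$.

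First I would recall that in Example 1, Theorem \ref{beck-lehmer4} together with the modified injection sending $(\varnothing,(2^{n/2}))$ to $(\varnothing,(n))$ in the case $n\equiv 0 \pmod 4$ identifies the three-way excess
\begin{align*}
&(\text{number of parts in all partitions in } \mathcal{Q}_o(n))\\
&\quad+(\text{number of even parts in all partitions in }\mathcal{P}_o(n,2))\\
&\quad-(\text{number of even parts in all partitions in }\mathcal{P}_e(n,2))
\end{align*}
with the number of pairs $(\lambda,(a^b))\vdash n$ satisfying the conditions (i) and (ii) stated there (with the unified additional constraint that $\lambda\neq\varnothing$ whenever $a\equiv 0\pmod 4$ and $b=1$).

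Next I would partition these pairs by the parity of $a$. When $a$ is odd, condition (i) forces $b=1$ and condition (ii) gives $\lambda\in\mathcal{Q}_o$ with $a\not\in\lambda$; the map $(\lambda,(a))\mapsto (\lambda\cup\{a\},\,a)$ is then an obvious bijection between such pairs and ordered pairs $(\nu,a)$ with $\nu\in\mathcal{Q}_o(n)$ and $a$ a part of $\nu$. Hence the pairs with $a$ odd count exactly the number of parts in all partitions in $\mathcal{Q}_o(n)$. Subtracting this identity cancels the $\mathcal{Q}_o(n)$ contribution on the left, leaving the excess in Corollary \ref{cor-even} equal to the number of pairs with $a$ even.

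It then remains to identify each pair $(\lambda,((2k)^b))\vdash n$ with $a=2k$ even ($k\geq 1$), $b$ odd, $\lambda\in\mathcal{Q}_o$, $\lambda_1-\lambda_2\leq 2k$, $\lambda\neq\mu(2k)$, and (when $k$ is even and $b=1$) $\lambda\neq\varnothing$ with the merged partition $\lambda\cup((2k)^b)\vdash n$ (setting $\lambda^o:=\lambda$). This is exactly a partition of $n$ with a unique even part $2k$ repeated an odd number of times and all other parts odd and distinct, satisfying the constraints listed in the Corollary. The only real subtlety will be to verify that the unified condition ``$\lambda^o\neq\varnothing$ when $k$ is even and $b=1$'' is consistent with both regimes: when $n\not\equiv 0\pmod 4$ it is automatically vacuous, since $\lambda^o=\varnothing$, $b=1$, and $k$ even would force $n=2k\equiv 0\pmod 4$; when $n\equiv 0\pmod 4$ it encodes precisely the exclusion of the image pair $(\varnothing,(n))$ used as the image of $(\varnothing,(2^{n/2}))$ under the modified injection.
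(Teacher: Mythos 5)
Your proposal is correct and follows essentially the same route as the paper's own (much terser) proof: split the pairs of Example 1 by the parity of $a$, observe that the odd-$a$ pairs $(\lambda,(a))$ with $a\not\in\lambda$ are in bijection with (partition in $\mathcal Q_o(n)$, distinguished part) and hence cancel the $\mathcal Q_o(n)$ part count, and merge each even-$a$ pair $(\lambda,((2k)^b))$ into the single partition $\lambda\cup((2k)^b)$. Your extra check that the condition ``$\lambda^o\neq\varnothing$ when $k$ is even and $b=1$'' is vacuous unless $n\equiv 0\pmod 4$, where it excludes exactly the image $(\varnothing,(n))$ of the modified injection, is a correct filling-in of a detail the paper leaves implicit.
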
 
\begin{proof}
Corollary \ref{cor-even} follows from the fact that counting $(\lambda, (a))\vdash n$ with $a$ odd and $a\not \in \lambda$ is the same as counting parts in $\mathcal Q_o(n)$. When $a=2k$, we insert the even (possibly repeated) part into the partition into distinct odd parts  to obtain a statement similar to the original Beck conjecture.  
\end{proof}

Combining Theorem \ref{beck-lehmer0} and Corollary \ref{cor-even}, we arrive at the following corollary.

\begin{corollary} 
\label{cor-odd}
The excess of the number of odd parts in all partitions in $\mathcal P_e(n,2)$ over the number of odd parts in $\mathcal P_o(n,2)\cup \mathcal Q_o(n)$ equals the number of partitions $\lambda$ of $n$ satisfying either
\begin{itemize}
\item[i.] $\lambda$ has exactly one even part, possibly repeated, and all other parts are odd and distinct, or
\item[ii.] all parts of $\lambda$ are odd and exactly one part $b$ is repeated. Moreover, let $\lambda^o = \lambda \setminus (b^{2k})$ be the partition obtained by removing from $\lambda$ the largest even number of parts equal to $b$, then $\lambda^o_1-\lambda^o_2 \leq 2k, \lambda^o\neq\mu(2k)$, and if $k$ is even and $b=1$ then $\lambda^o\neq \varnothing$.
\end{itemize}
\end{corollary}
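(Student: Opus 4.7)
The plan is to decompose the count of odd parts as (total parts) minus (even parts), and then combine Theorem \ref{beck-lehmer0} with Corollary \ref{cor-even} term by term. Writing $T_o(\mathcal{F})$ and $T_e(\mathcal{F})$ for the total number of odd and even parts, respectively, in all partitions of $n$ belonging to a set $\mathcal{F}(n)$, and using the fact that every partition in $\mathcal{Q}_o(n)$ consists entirely of odd parts (so $T_e(\mathcal{Q}_o)=0$), one has
\[
T_o(\mathcal{P}_e) - T_o(\mathcal{P}_o\cup\mathcal{Q}_o) = \bigl(T(\mathcal{P}_e) - T(\mathcal{P}_o\cup\mathcal{Q}_o)\bigr) - \bigl(T_e(\mathcal{P}_e) - T_e(\mathcal{P}_o)\bigr).
\]
Theorem \ref{beck-lehmer0} evaluates the first bracket as the number of partitions of $n$ described in condition (i), namely those with exactly one even part (possibly repeated) and all other parts odd and distinct. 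Corollary \ref{cor-even} evaluates the second bracket as the number of partitions $\lambda=\lambda^o\cup((2k)^b)$ with $\lambda^o\in\mathcal{Q}_o$, $b$ odd, $k\ge 1$, subject to the conditions $\lambda^o_1-\lambda^o_2\le 2k$, $\lambda^o\neq\mu(2k)$, and the edge-case exclusion when $k$ is even and $b=1$.

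The key combinatorial step is to biject the family counted by Corollary \ref{cor-even} with the family described in condition (ii). The natural choice swaps the part-value and the multiplicity on the repeated block:
\[
\lambda^o\cup((2k)^b)\ \longmapsto\ \lambda^o\cup(b^{2k}).
\]
Because $b$ is odd and $\lambda^o\in\mathcal{Q}_o$, the image has all odd parts, and $b$ is the unique repeated part, appearing either $2k$ or $2k+1$ times depending on whether $b$ already occurs in $\lambda^o$. Peeling off the largest even number of copies of $b$ from the image recovers $\lambda^o$ exactly as the partition $\lambda\setminus(b^{2k})$ in the statement of (ii), so the map is an involution giving a bijection. Under this correspondence the three constraints on $(\lambda^o,k,b)$ transcribe verbatim to the conditions in (ii).

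Finally, the two families in (i) and (ii) are disjoint, since partitions of type (i) contain an even part while those of type (ii) are entirely odd, so the combined count equals the sum of the two bracket expressions and yields the claimed identity. The only step requiring genuine care is the swap $(2k)^b\leftrightarrow b^{2k}$: one must check that no unintended repetition is introduced (immediate from $\lambda^o\in\mathcal{Q}_o$ and $b$ odd) and that the multiplicity of $b$ in the image has the right parity structure to uniquely recover the pair $(k,b)$. Both are straightforward once the decomposition is in place, so I expect the entire argument to be short.
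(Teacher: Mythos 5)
Your proposal is correct and follows essentially the same route as the paper: the excess of odd parts is obtained by adding the excess from Theorem \ref{beck-lehmer0} to that of Corollary \ref{cor-even}, and the family of Corollary \ref{cor-even} is matched to condition (ii) via the swap $\lambda^o\cup((2k)^b)\mapsto\lambda^o\cup(b^{2k})$, with the same observation about the parity of the multiplicity of $b$ in the image. The only quibble is calling the swap an ``involution'' (it is a bijection between two different families, inverted by peeling off the largest even number of copies of the unique repeated part), which does not affect correctness.
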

\begin{proof}
The excess in Corollary \ref{cor-odd} is the sum of the excess in Theorem \ref{beck-lehmer0} and Corollary \ref{cor-even}. The partitions described in \emph{ii.} are disjoint from the partitions described in \emph{i.} They are in one-to-one correspondence with the partitions described in Corollary \ref{cor-even}.  
To see this correspondence, consider a partition $\mu=(\mu^o \cup ((2k)^b))$ as in Corollary \ref{cor-even}. Then define $\lambda=\mu^o\cup (b^{2k})$.
Now $b$, which is odd, is a repeated part. The part $b$ may have already existed in $\mu^o$, so its multiplicity in $\lambda$ can be even or odd. 
\end{proof}

\subsubsection{Example 2: $L_r\cup O_r = \{r,2r\}$}
In this case,  
\[\frac{\partial}{\partial z}\Big|_{z=1}(\widetilde Q_{r,O_r}(z;q) - \widetilde E_{r,L_r}(z;q)) = (-q;q^2)_\infty \sum_{k=0}^\infty \frac{q^{kr}}{1+q^{kr}}.\]
Theorem \ref{beck-lehmer4} implies that the series has non-negative coefficients when $r\geq3$, because $2\not \in L_r\cup O_r.$
When $r=1$, both Example 1 and Proposition \ref{lem_ec} show that the series has non-negative coefficients.
When $r=2$, the series also has non-negative coefficients.
As in Example 1, when $n\equiv 0 \pmod 4$, we can map $(\varnothing, (2^{n/2}))$ to $(\varnothing, (n))$.
Alternatively, one can see that in Proposition \ref{lem_ec}, the coefficient of $q^n$ in $(-q;q^2)_\infty \sum_{k=0}^\infty \frac{q^{4k+2}}{1+q^{4k+2}}$ is negative (and equal to $-1$) only for $n=4,8,12$, while it can be computed that the coefficient of $q^4, q^8, q^{12}$ in $(-q;q^2)_\infty \sum_{k=0}^\infty \frac{q^{4k+4}}{1+q^{4k+4}}$ are $1,1,4$, respectively.

\subsubsection{Example 3: $L_r\cup O_r = \{1,2r\}$}
In this case,
\begin{align*}
&\frac{\partial}{\partial z}\Big|_{z=1}(\widetilde Q_{r,O_r}(z;q) - \widetilde E_{r,L_r}(z;q)) \notag \\
=& (-q;q^2)_\infty \left(\sum_{k=0}^\infty \frac{q^{2kr+1}}{1+q^{2kr+1}} +\sum_{k=1}^\infty \frac{q^{2kr}}{1+q^{2kr}}\right).\notag
\end{align*}
Theorem \ref{beck-lehmer4} (for $r\ge 2$) and Example 1 (for $r=1$) imply that the series has non-negative coefficients. 

\begin{remark}
 The  derivative differences given in Examples 2 and 3  also have interpretations as generating series for the number of pairs of partitions satisfying certain conditions as in Theorem \ref{beck-lehmer4}.
\end{remark}

\section{Further non-negativity results}\label{sec_aux}

The derivative difference in Example 3 can be expressed as
\begin{align}
&(-q;q^2)_\infty
\left(\sum_{k=1}^{\infty} \frac{q^{2kr}}{1-q^{4kr}}-\sum_{k=0}^{\infty} \frac{q^{2(2kr+1)}}{1-q^{2(2kr+1)}}\right) \label{seriesThm6.2}\\
&\quad +(-q;q^2)_\infty\left( \sum_{k=0}^{\infty} \frac{q^{2kr+1}}{1-q^{2(2kr+1)}}-\sum_{k=1}^{\infty} \frac{q^{4kr}}{1-q^{4kr}}\right). \label{seriesThm6.3}
\end{align}
In Theorem \ref{thm6.2}, we show that \eqref{seriesThm6.2} has non-positive coefficients, and, in Theorem \ref{thm6.3}, we show that \eqref{seriesThm6.3} has non-negative coefficients.

\begin{theorem}
\label{thm6.2}
For $r\in \mathbb{N}$, we have that $$(-q;q^2)_\infty\left(\sum_{k=0}^{\infty} \frac{q^{2(2kr+1)}}{1-q^{2(2kr+1)}}-\sum_{k=1}^{\infty} \frac{q^{2kr}}{1-q^{4kr}}\right)\succeq 0.$$ 
 
\end{theorem}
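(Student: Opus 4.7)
The plan is to reduce the non-negativity in Theorem~\ref{thm6.2} to the non-negativity of $(-q;q^2)_\infty(1-q^{2a})$ for $a\ge 2$, established in the proof of Proposition~\ref{lem_ec}, and to handle one boundary case directly. First I would rewrite the first series so that it parallels the second. Expanding $\sum_{k\ge 0}\frac{q^{2(2kr+1)}}{1-q^{2(2kr+1)}} = \sum_{k\ge 0}\sum_{m\ge 1}q^{2m(2kr+1)}$ and swapping the order of summation gives $\sum_{m\ge 1}\frac{q^{2m}}{1-q^{4mr}}$, while the second series is already in the matching shape $\sum_{k\ge 1}\frac{q^{2kr}}{1-q^{4kr}} = \sum_{m\ge 1}\frac{q^{2mr}}{1-q^{4mr}}$ after renaming the summation index. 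Subtracting and factoring out $q^{2m}$ yields the key identity
\[
\sum_{k=0}^\infty \frac{q^{2(2kr+1)}}{1-q^{2(2kr+1)}}-\sum_{k=1}^\infty \frac{q^{2kr}}{1-q^{4kr}}
=\sum_{m\ge 1}\frac{q^{2m}(1-q^{2m(r-1)})}{1-q^{4mr}},
\]
so that the series in Theorem~\ref{thm6.2} becomes $\sum_{m\ge 1}\frac{q^{2m}}{1-q^{4mr}}\cdot (-q;q^2)_\infty(1-q^{2m(r-1)})$ after multiplication by $(-q;q^2)_\infty$.

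Next I would invoke the key lemma from the proof of Proposition~\ref{lem_ec}: $(-q;q^2)_\infty(1-q^{2a})\succeq 0$ for every integer $a\ge 2$. Since $\frac{q^{2m}}{1-q^{4mr}}$ has non-negative coefficients, any summand with $m(r-1)\ge 2$ is automatically $\succeq 0$. For $r=1$ every summand vanishes; for $r\ge 3$, $m(r-1)\ge r-1\ge 2$ holds for every $m\ge 1$, which settles those cases at once. The only obstruction is the single boundary summand corresponding to $r=2$ and $m=1$, for which $(-q;q^2)_\infty(1-q^2)$ carries its unique negative coefficient $-1$ at $q^2$ (as recorded in the proof of Proposition~\ref{lem_ec}).

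To absorb this bad term I would combine the $m=1$ and $m=2$ summands (for $r=2$) over their common denominator $1-q^{16}$. A short algebraic simplification gives
\[
\frac{q^{2}(1-q^{2})}{1-q^{8}}+\frac{q^{4}(1-q^{4})}{1-q^{16}} = \frac{(1-q^{2})(q^{2}+q^{4}+q^{6}+q^{10})}{1-q^{16}},
\]
so since $\frac{1}{1-q^{16}}\succeq 0$ it suffices to verify $(-q;q^2)_\infty(1-q^2)(q^{2}+q^{4}+q^{6}+q^{10})\succeq 0$. Writing $f(q):=(-q;q^2)_\infty(1-q^2) = g(q)-q^{2}$ with $g(q)\succeq 0$, this further reduces to the elementary coefficient-wise inequality $g(q)(q^{2}+q^{4}+q^{6}+q^{10})\succeq q^{4}+q^{6}+q^{8}+q^{12}$, which I would verify at each of the four exponents $n\in\{4,6,8,12\}$ using $g_0=g_4=g_8=1$.

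The hard part is precisely this $r=2$, $m=1$ boundary case: the same $\ell=2$ anomaly that forced the exceptional table in Proposition~\ref{lem_ec} reappears here and demands the explicit combination above. Everything else reduces routinely to the non-negativity lemma once the telescoping identity for $A(q)-B(q)$ is in place.
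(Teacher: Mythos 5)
Your proof is correct, but it takes a genuinely different route from the paper's. The paper proves Theorem \ref{thm6.2} combinatorially: the series is $\sum_n(|B(n)|-|A(n)|)q^n$, where $A(n)$ consists of pairs $(\lambda,(a^b))\vdash n$ with $\lambda\in\mathcal Q_o$, $a\equiv 0\pmod{2r}$, $b$ odd, and $B(n)$ is the analogous set with $a\equiv 1\pmod{2r}$, $b$ even; the paper constructs an explicit injection $T\colon A(n)\to B(n)$ (a $\bmod\ 2r$ deformation of the conjugation-based injection from the proof of Theorem \ref{beck-lehmer}, splitting $A(n)$ into three cases according to whether $a+(2r-1)(b-1)$ is a part of $\lambda$). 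You instead work analytically: your interchange of summation and the resulting identity $\sum_{m\ge 1}q^{2m}(1-q^{2m(r-1)})/(1-q^{4mr})$ for the inner difference are correct (I checked it numerically for $r=2$), and the reduction to the lemma $(-q;q^2)_\infty(1-q^{2a})\succeq 0$ for $a\ge 2$ from the proof of Proposition \ref{lem_ec} disposes of every summand except the $r=2$, $m=1$ term. Your handling of that boundary case also checks out: the combination over the common denominator $1-q^{16}$ is algebraically right, and the coefficient check at $n\in\{4,6,8,12\}$ succeeds since $[q^4]=g_0$, $[q^6]=g_0+g_4$, $[q^8]=g_4$, $[q^{12}]\ge g_8$ in $g(q)(q^2+q^4+q^6+q^{10})$, with $g_0=g_4=g_8=1$. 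The trade-off: your argument is shorter and makes transparent that the only obstruction is the same $\ell=2$ anomaly already isolated in Proposition \ref{lem_ec}, but it yields no combinatorial information; the paper's injection additionally produces an explicit description of $B(n)\setminus T(A(n))$ (hence a partition-theoretic interpretation of the coefficients) and specializes at $r=1$ to the bijection $(a^b)\mapsto(b^a)$, which is why the series vanishes there.
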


\begin{proof}

It suffices to construct an injection $T$ from 
$$A(n):=\{(\lambda,(a^b))\vdash n\ | \ \lambda \in \mathcal Q_o, a\equiv 0\spmod{2r}, b \text{ odd}\}$$
to 
$$B(n):=\{(\lambda,(a^b))\vdash n\ | \ \lambda \in \mathcal Q_o,  a\equiv 1\spmod{2r},b \text{ even}\}.$$

Let $(\lambda, (a^b))\in A(n)$.
Let $0\leq c<2r$ be the remainder of $b-1$ when divided by $2r$. Note that $c$ is even. 
We partition the set $A(n)$ into two disjoint subsets:
\begin{align*}
    &A_1(n):=\{(\lambda,(a^b))\in A(n) \ | \  \lambda\neq\varnothing \};\\
    &A_2(n):=\{(\lambda,(a^b))\in A(n) \ | \ \lambda=\varnothing\}.
\end{align*}
We define $T$ on each $A_i(n)$ in the following way.
\begin{enumerate}
    \item If $\lambda\neq \varnothing$, then $$T(\lambda, (a^b)) = (\lambda \setminus \{\lambda_1\} \cup \{\lambda_1+ab-(a-c)(b-c)\}, (b-c)^{a-c}).$$
    \item If $\lambda =\varnothing$, then $$T(\varnothing, (a^b)) = (\mu(ab-(a-c)(b-c)), (b-c)^{a-c}).$$
\end{enumerate}
The image sets are thus
\begin{align*}
    T(A_1(n)))=&\{(\mu,(x^y))\in B(n) \ | \
   \mu_1-\mu_2> (y+z)(x+z)-xy\},\\
    T(A_2(n))=&\{(\mu,(x^y))\in B(n)  \ | \
    \mu=\mu((x+z)(y+z)-xy) \},
\end{align*}
where $z$ is the remainder of $-y$ when divided by $2r$.

Note that $T$ maps $(\lambda, (a^b))\in A(n)$ with $b\equiv \ell \pmod {2r}$ to $(\mu, (x^y))\in B(n)$ with $y\equiv -\ell+1 \pmod {2r}$.
When $b\equiv 1\pmod{2r}$, $T(\varnothing, (a^b)) = (\varnothing, (b^a))\not\in T(A_1(n))$.
When $b\equiv \ell \not\equiv 1\pmod{2r}$, because $(y+z)(x+z)-xy\geq2(x+y)+4\geq4$, $T(A_1(n))$ and $T(A_1(n))$ are disjoint.

Define the map $L$ from $T(A(n))$ to $A(n)$ as follows:
\begin{enumerate}
    \item If $(\mu,(x^y))\in T(A_1)$, then 
     $$L(\mu, (x^y)) = (\mu \setminus \{\mu_1\} \cup \{\mu_1-(y+z)(x+z)+xy\}, ((y+z)^{x+z})).$$
     \item If $(\mu,(x^y))\in T(A_2)$, then
     $$L(\mu, (x^y)) = (\varnothing, ((y+z)^{x+z})).$$
\end{enumerate}
Then $L$ and $T$ are inverse to each other. 
Hence $T$ is an injection and Theorem \ref{thm6.2} follows.
\end{proof}
\begin{remark}
When $r=1$, the injection $T$ is the \emph{bijection} $(\lambda, (a^b))\mapsto (\lambda, (b^a))$, where the conjugation $(a^b)\mapsto (b^a)$ was used in the proof of Theorem \ref{beck-lehmer}.
\end{remark}

\begin{theorem}
\label{thm6.3}
For $r\in \mathbb N$, we have that $$(-q;q^2)_\infty \left(\sum_{k=0}^\infty \frac{q^{2kr+1}}{1-q^{2(2kr+1)}}-\sum_{k=1}^\infty \frac{q^{4kr}}{1-q^{4kr}}\right)\succeq 0.$$  

\end{theorem}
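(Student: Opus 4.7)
Expanding the geometric series and multiplying by $(-q;q^2)_\infty$, we interpret the coefficient of $q^n$ on the left-hand side as $|B(n)| - |A(n)|$, where
\begin{align*}
A(n) &:= \{(\lambda, (a^b)) \vdash n : \lambda \in \mathcal{Q}_o,\ a \equiv 0 \pmod{4r},\ b \geq 1\}, \\
B(n) &:= \{(\lambda, (a^b)) \vdash n : \lambda \in \mathcal{Q}_o,\ a \equiv 1 \pmod{2r},\ b \text{ odd}\}.
\end{align*}
Thus the plan is to construct an injection $T : A(n) \hookrightarrow B(n)$, following the template of the proof of Theorem~\ref{thm6.2}. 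To simplify the target, we first use the identity
\[
(-q;q^2)_\infty \cdot \frac{q^a}{1-q^{2a}} = \Bigl(\prod_{j \neq (a-1)/2}(1+q^{2j+1})\Bigr)\cdot \frac{q^a}{1-q^a}, \qquad \text{for any odd }a,
\]
obtained by cancelling the factor $(1+q^a)$ present on both sides. Summing over $a\equiv 1\pmod{2r}$ gives $|B(n)| = |B'(n)|$, where
\[
B'(n) := \{(\lambda, (a^b)) \vdash n : \lambda \in \mathcal{Q}_o,\ a \equiv 1 \pmod{2r},\ a \notin \lambda,\ b \geq 1\};
\]
the underlying bijection $B(n) \leftrightarrow B'(n)$ moves one copy of $a$ between $\lambda$ and the rectangle, flipping the parity of $b$, and it replaces the constraint ``$b$ odd'' with the cleaner ``$a \notin \lambda$''.

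The injection $T : A(n) \to B'(n)$ is then defined by a case analysis on $(\lambda, ((4kr)^b))$. The basic template sends this pair to $(\lambda', ((4kr+1)^{b'}))$, noting that $a' = 4kr+1$ is automatically odd and $\equiv 1 \pmod{2r}$. Because $ab$ is even while $a'b'$ is odd, the change $|\lambda'| - |\lambda| = ab - a'b'$ is necessarily odd, so $\lambda$ must gain or lose exactly one odd part. We partition $A(n)$ according to the parity of $b$ together with the presence or absence in $\lambda$ of specific values (such as $b$, $4kr+1-b$, and $4kr+1$). On the principal piece---$b$ odd with $b \in \lambda$ and $4kr+1 \notin \lambda \setminus \{b\}$---we set $b' = b$ and $\lambda' = \lambda \setminus \{b\}$. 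On the remaining pieces we invoke either a secondary shift $a' = 4kr - 2r + 1 \equiv 1 \pmod{2r}$ paired with inserting a compensating odd part into $\lambda$, or, when the intended insertion or removal would collide with $a'$, a rerouted map in the companion sub-case.

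The main obstacle will be verifying global injectivity across all these pieces. Mirroring the proof of Theorem~\ref{thm6.2}, the plan is to describe the image of each sub-case of $A(n)$ as a prescribed disjoint subset of $B'(n)$, check pairwise disjointness, and write out the corresponding left inverse case by case. The delicate point is the interaction between the requirement $a' \notin \lambda'$ and the forced one-part modification of $\lambda$: the exceptional situations where the intended part equals $a'$ must be absorbed into companion sub-cases via the secondary shift. Once this case analysis is complete and the inverse is verified to reassemble the correct sub-case of $A(n)$, the theorem follows at once.
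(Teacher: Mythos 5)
Your combinatorial setup is sound: the coefficient of $q^n$ is indeed $|B(n)|-|A(n)|$ for the sets you describe (your $A(n)$, indexed by $a\equiv 0\pmod{4r}$ with $b\geq 1$, is an equivalent reparametrization of the paper's choice $a\equiv 0\pmod{2r}$ with $b$ even), and your reduction from $B(n)$ to $B'(n)$ --- cancelling the factor $(1+q^a)$ and trading the parity condition on $b$ for the condition $a\notin\lambda$ --- is correct, and is the same device the paper uses for odd $\ell$ in the proof of Theorem \ref{beck-lehmer4}. The gap is that the injection $T\colon A(n)\to B'(n)$, which is the entire content of the theorem, is never actually constructed. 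You define $T$ only on the ``principal piece'' where $b$ is odd, $b\in\lambda$, and $4kr+1\notin\lambda\setminus\{b\}$; this excludes most of $A(n)$, e.g.\ every pair with $\lambda=\varnothing$, every pair with $b$ even, and every pair with $b$ odd but $b\notin\lambda$. For all remaining cases you offer only the phrases ``secondary shift,'' ``compensating odd part,'' and ``rerouted map in the companion sub-case,'' with no formulas, no verification that the proposed images lie in $B'(n)$ and have the correct size, and no inverse map. You yourself identify global injectivity as ``the main obstacle'' and defer it. As written this is a plan for a proof, not a proof.

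For comparison, the paper resolves exactly this obstacle with a clean three-way split: parametrizing $A(n)$ by $a\equiv 0\pmod{2r}$ and $b$ even, it splits on whether the distinguished value $a+(2r-1)(b-1)$ is absent from $\lambda$, present alongside other parts, or constitutes all of $\lambda$; in each case it gives an explicit map into $B(n)$ (with new rectangle $((a+1-2r)^{b-1})$ or $((a+1)^{b-1})$), computes the three image sets, checks their pairwise disjointness, and exhibits an explicit two-sided inverse $L$. The paper also gives a second, softer proof requiring no new injection at all: the series in question equals the Example 3 series (non-negative by Theorem \ref{beck-lehmer4} together with Example 1) plus the series of Theorem \ref{thm6.2}, hence is non-negative. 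To close your gap you would need either to complete your case analysis to that standard --- explicit maps, image sets, disjointness, and inverse --- or to reroute through Theorem \ref{thm6.2} as in the paper's first proof.
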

\begin{proof}[First Proof.]
Because the derivative difference in Example 3 has non-negative coefficients, Theorem \ref{thm6.3} follows from Theorem \ref{thm6.2}.
\end{proof}

\begin{proof}[Second Proof.] 
Alternatively, we can also prove Theorem  \ref{thm6.3} directly.
It suffices to construct an injection $T$ from   $$A(n):=\{(\lambda,(a^b))\vdash n\ | \ \lambda\in \mathcal Q_o,a\equiv 0\spmod {2r}, b \text{ even} \}$$ to  $$B(n):=\{(\lambda,(a^b))\vdash n \ | \ \lambda \in \mathcal Q_o, a\equiv 1\spmod{2r}, b \text{ odd}\}.$$ We partition the set $A(n)$ into three disjoint subsets: 
\begin{align*}
    &A_1(n):=\{(\lambda,(a^b))\in A(n) \mid a+(2r-1)(b-1) \not\in \lambda \};\\ \ \\ 
    &A_2(n) \\ &:={\small{\{(\lambda,(a^b))\in A(n) \mid a+(2r-1)(b-1) \in \lambda \text{ and $\lambda$ has at least two parts}\}};}\\ \ \\ 
    &A_3(n):=\{(\lambda,(a^b))\in A(n) \mid \lambda =(a+(2r-1)(b-1))   \}.
\end{align*}

We define $T$ on each $A_i(n)$ in the following way.

\begin{enumerate}
\label{injection_general}
    \item If $(\lambda, (a^b))\in A_1(n)$ (including the case where $\lambda$ is empty), then $$T(\lambda, (a^b)):=\left(\lambda\cup \{a+(2r-1)(b-1)\}, \left((a+1-2r)^{b-1}\right)\right).$$ 
    \item If $(\lambda, (a^b))\in A_2(n)$, let $m$ denote the largest part of $\lambda$ that is not $(a+(2r-1)(b-1))$. Then $T(\lambda, (a^b))$ equals  {\small{\begin{align*}\left((\lambda\setminus\{a\!+(2r\!-1)(b\!-1),m\})\cup\{2(a\!+(2r\!-1)(b\!-1))\!+m\}, \left((a\!+1\!-2r)^{b\!-1}\right)\right).\end{align*}}} 
    \item If $(\lambda, (a^b))\in A_3(n)$, then $$T(\lambda, (a^b)):=\left(\left(a+1,a+(2r-2)b-(2r-1)\right), \left((a+1)^{b-1}\right)\right).$$ 
\end{enumerate}
The image sets are thus 
\begin{align*}
    &T(A_1(n))=\{(\mu,(c^d))\in B(n)\mid c+(2r-1)(d+1) \in \mu \}, \\ \ \\
    &T(A_2(n))=\left\{(\mu,(c^d))\in B(n) \ \Bigg | \ \begin{array}{l}  c+(2r-1)(d+1) \not \in \mu, \\  \mu_1-\mu_2>2(c+(2r-1)(d+1)), \\
    \mu_1 \neq 3(c+(2r-1)(d+1))
     \end{array}\right\},\\ \  \\
    &T(A_3(n))=\{(\mu,(c^d))\in B(n)\mid \mu =  (c,c+(2r-2)d-2)\}.
\end{align*}

Note that when $r=1$, $(2r-2)d-2=-2<0$, and $2\leq 2(c+(2r-1)(d+1))$.
When $r>1$, $(2r-2)d-2>0$ and $(2r-2)d-2\leq 2(c+(2r-1)(d+1))$.
Hence $T(A_1(n)), T(A_2(n)), T(A_3(n))$ are pairwise disjoint. 

Define the map $L$ from $T(A(n))$ to $A(n)$ as follows:
\begin{enumerate}
    \item If $(\mu,(c^d))\in T(A_1(n))$, then 
$$L(\mu,(c^d)):=(\mu\setminus\{c+(2r-1)(d+1)\},((c+2r-1)^{d+1})).$$
    \item If $(\mu,(c^d))\in T(A_2(n))$, then  we define $L(\mu,(c^d))$ by
   {\small{\begin{align*}((\mu\setminus\{\mu_1\})\cup\{c\!+(2r\!-1)(d\!+1),\mu_1-2(c\!+(2r\!-1)(d\!+1))\},((c\!+2r\!-1)^{d+1})).\end{align*}}}
    \item If $(\mu,(c^d))\in T(A_3(n))$, then $$L(\mu,(c^d)):=\left((c-1+(2r-1)d),\left((c-1)^{d+1}\right)\right).$$
\end{enumerate}
Then $L$ and $T$ are inverses of each other. 
Hence, $T$ is an injection and Theorem \ref{thm6.3} follows.
\end{proof}
\begin{remark}
When $r=1$, the second proof of Theorem \ref{thm6.3} recovers the proof of Theorem \ref{beck-lehmer}.
\end{remark}

\section*{Acknowledgements}  The authors thank the Banff International Research Station (BIRS) and the Women in Numbers 5 (WIN5) Program. The third author is  partially supported by National Science Foundation Grant DMS-1901791. The fifth author 
is partially supported by a FRQNT scholarship by Fonds de Recherche du Qu\'ebec, and an ISM scholarship by Institut des Sciences Math\'ematiques.

\ \\ \textit{Department of Mathematics and Computer Science, College of the Holy Cross, Worcester, MA 01610 USA} \\
\texttt{cballant@holycross.edu}     
\smallskip \\ 
\textit{School of Mathematics, University of Minnesota, Twin Cities, 
127 Vincent Hall 206 Church St. SE, Minneapolis, MN 55455, USA} \\
\texttt{hburson@umn.edu}
 \smallskip \\
\textit{Department of Mathematics and Statistics, Amherst College, Amherst, MA 01002, USA} \\
\texttt{afolsom@amherst.edu}  
 \smallskip \\ 
 \textit{Department of Mathematics, University of California, Los Angeles, Math Sciences Building, 520 Portola Plaza, Box 951555, 
Los Angeles, CA 90095, USA} \\
\texttt{cyhsu@math.ucla.edu}
\smallskip \\
 \textit{Mathematics and Statistics, McGill University, Burnside Hall
805 Sherbrooke Street West,
Montreal, Quebec H3A 0B9, Canada} \\ 
\texttt{isabella.negrini@mail.mcgill.ca}
\smallskip \\
 \textit{Department of Mathematics, University of Wisconsin-Madison, 480 Lincoln Drive, Madison, WI 53706, USA} \\ 
\texttt{bwen25@wisc.edu}

\begin{thebibliography}{00}

\bibitem{AndEncy}	
G.\ E.\ Andrews, \textit{The Theory of Partitions,} Encyclopedia of Mathematics And Its Applications, Vol.\ 2, Addison Wesley, 1976.

\bibitem{A17}
G.\ E.\ Andrews, 
\textit{Euler's partition identity and two problems of George Beck}. Math. Student 86 (2017), no. 1-2, 115--119.


\bibitem{AB19} 
G.\ E.\ Andrews and C.\ Ballantine, 
\textit{Almost partition identities}, Proc. Natl. Acad. Sci. USA 116 (2019). no. 12, 5428--5436.


\bibitem{BB19}
C.\ Ballantine and R.\ Bielak, 
\textit{Combinatorial proofs of two Euler-type identities due to Andrews}. Ann. Comb. 23 (2019), no. 3-4, 511–525.

\bibitem{BW21}
C.\ Ballantine and A.\ Welch, \textit{Beck-type companion identities for Franklin's identity via a modular refinement,} Discrete Math.\  344 8 (2021), 112480.






\bibitem{D52}
L.\ Dickson,
\textit{History of the theory of numbers}. Vol. II: Diophantine analysis. Chelsea Publishing Co., New York 1952.



\bibitem{G1876}
J.\ W.\ L.\ Glaisher, 
\textit{On formulae of verification in the partition of numbers}. Proc. Royal Soc. London 24 (1876), 250--259.

\bibitem{G75}
H.\ Gupta,
\textit{Combinatorial proof of a theorem on partitions into an even or odd number of parts}. J. Combinatorial Theory Ser. A 21 (1976), no. 1, 100–103.


\bibitem{LW20} 
R.\ Li and A.\ Y.\ Z.\ Wang, 
\textit{Partitions associated with two fifth-order mock theta functions and Beck-type identities}. Int. J. Number Theory (2020), no. 4, 841--855.

\bibitem{oeisA090867} 
The On-Line Encyclopedia of Integer Sequences, Sequence A090867, https://oeis.org/A090867

\bibitem{Yang19}
J.\ Y.\ X.\ Yang, 
\textit{Combinatorial proofs and generalizations of conjectures related to Euler’s partition theorem}. European J. Combin. 76 (2019), 62--72.
 
\end{thebibliography}
\end{document}